\def\newaliasedtheorem#1[#2]#3{
	\newaliascnt{#1@alt}{#2}
	\newtheorem{#1}[#1@alt]{#3}
	\expandafter\newcommand\csname #1@altname\endcsname{#3}
}
\numberwithin{equation}{section}
\newtheoremstyle{slanted}{\topsep}{\topsep}{\slshape}{}{\bfseries}{.}{.5em}{}
\theoremstyle{plain}
\newtheorem{theorem}{Theorem}[section]
\theoremstyle{definition}
\theoremstyle{remark}
\newcommand{\setN}{\mathbb{N}}
\newcommand{\setR}{\mathbb{R}}
\newcommand{\eps}{\varepsilon}
\let\altphi\phi
\let\phi\varphi
\let\varphi\altphi
\let\altphi\undefined
\newcommand{\abs}[1]{\left\lvert#1\right\rvert}
\newcommand{\norm}[1]{\left\lVert#1\right\rVert}
\let\div\undefined
\DeclareMathOperator{\div}{div}
\newcommand{\di}{\mathop{}\!\mathrm{d}}
\newcommand{\loc}{{\rm loc}}
\DeclareMathOperator{\supp}{supp}
\newcommand{\leb}{\mathscr{L}}
\newfont{\tmpf}{cmsy10 scaled 2500}
\def\Xint#1{\mathchoice
	{\XXint\displaystyle\textstyle{#1}}%
	{\XXint\textstyle\scriptstyle{#1}}%
	{\XXint\scriptstyle\scriptscriptstyle{#1}}%
	{\XXint\scriptscriptstyle\scriptscriptstyle{#1}}%
	\!\int}
\def\XXint#1#2#3{{\setbox0=\hbox{$#1{#2#3}{\int}$ }
		\vcenter{\hbox{$#2#3$ }}\kern-.6\wd0}}
\def\dashint{\Xint-}
\begin{document}
	
	\title{Sharp regularity estimates for solutions of the continuity equation drifted by Sobolev vector fields}
	
	\author{Elia Bru\'e  \thanks{Scuola Normale Superiore, \url{elia.brue@sns.it, }}~~~~~ 
	Quoc-Hung Nguyen \thanks{Scuola Normale Superiore, \url{quochung.nguyen@sns.it, }} 
    \\\\ Scuola Normale Superiore,  Piazza dei Cavalieri 7, I-56100 Pisa, Italy. }
	\maketitle

	\begin{abstract}
		The aim of this note is to prove sharp regularity estimates for solutions of the continuity equation associated to vector fields of class $W^{1,p}$ with $p>1$.
		The regularity is of  ``logarithmic order'' and is measured by means of suitable versions of Gargliardo's seminorms.
	
	    \medskip
		
		\textit{Key words}: Ordinary differential equations with non smooth vector fields; continuity equation; transport equation; regular Lagrangian flow;  $BV$ function; log-Sobolev space; Bressan's mixing conjecture.
		\medskip\\
		\textit{MSC} (2010): 34A12,35F25,35F10
	\end{abstract}

\tableofcontents

\section{Introduction and main result}\label{sec: Introduction and main result}
In this paper we study regularity properties of solutions to the continuity equation in the $d$-dimensional Euclidean space: 
\begin{equation}\label{CE}
  \begin{dcases}
  \partial_t u+\div (bu)=0,\\
  u_0=\bar{u},
  \end{dcases}\tag{CE}
  \qquad \text{in}\ [0,T]\times\mathbb{R}^d,
\end{equation}
where $b: [0,T]\times\setR^d\rightarrow\setR^d$ is a time dependent vector field, $\bar{u}:\setR^d\rightarrow \setR$ is the initial data and $u:[0,T]\times \setR^d\to \setR$ is the unknown of the problem. 

We are mainly interested in the study of the Cauchy problem \eqref{CE} assuming the Sobolev regularity and the \textit{incompressibility condition} on the drift $b$, that is to say
\begin{equation*}
     \int_0^T \norm{b_s}_{W^{1,p}(\mathbb{R}^d)} \di s <\infty\quad
     \text{for some } p>1\text{ and } \div b_t(x)=0\quad \text{for a.e. }t \in [0,T],
\end{equation*}
where the divergence is understood in the distributional sense.
We will often assume a growth condition as $\norm{b}_{L^{\infty}([0,T]\times\setR^d)}<\infty$, look at \autoref{remark: growth conditions} for more details on this technical point. It is worth noticing that the just established setting is quite natural both from the theoretical point of view and for its applications to the study of nonlinear partial differential equations of the mathematical physics.

Solutions of \eqref{CE} are understood in the distributional sense in the class $L^{\infty}([0,T]\times \setR^d)$, more precisely we look at weak-star continuous maps $t \to u_t\in L^{\infty}(\setR^d)$ such that, for every $\phi\in C^{\infty}_c(\setR^d)$, the function $t\to \int_{\setR^d} \phi u_t \di x$ is absolutely continuous and fulfills
\begin{equation*}
	\frac{\di}{\di t}\int_{\setR^d} \phi u_t\di x=\int_{\setR^d} b_t\cdot \nabla \phi u_t \di x\qquad
	\text{for a.e.}\ t \in [0,T].
\end{equation*}
The Cauchy problem \eqref{CE} is strictly related to the system of ordinary differential equations
\begin{equation}\label{ODE}
\begin{dcases}
\frac{\di}{\di t} X(t,x)=b(t, X(t,x)),\\
X(0,x)=x, \quad (t,x)\in [0,T]\times\mathbb{R}^d.
\end{dcases}\tag{ODE}
\end{equation}
Indeed, when the vector field is regular enough (for instance globally bounded and Lipschitz in the spatial variable, uniformly in time) the classical Cauchy-Lipschitz theory grants the existence of a unique flow map $X:[0,T]\times \setR^d\rightarrow \setR^d$, furthermore the formula
\begin{equation}\label{z12}
	(X_t)_{\#} \bar{u} \leb^d=u_t\leb^d,
	\footnote{
		This is an identity between measures, where the left hand side is defined by $(X_t)_{\#} \bar{u} \leb^d (E):= (u\leb^d)((X_t)^{-1}(E))$ for every Borel set $E\subset \setR^d$. Note that \eqref{z12} is equivalent to 
		\begin{align*}
		\int_{\mathbb{R}^d}\varphi(x)u_t(x)dx=\int_{\mathbb{R}^d} \varphi (X(t,x))u_0(x)dx~~\forall~\varphi\in C_b(\mathbb{R}^d).
		\end{align*}
	}
\end{equation}
provides the unique solution $u_t$ of the Cauchy problem \eqref{CE}.

This link between \eqref{CE} and \eqref{ODE} is still present out of a smooth setting, but it is very subtle. The main technical issue is the loss of point-wise uniqueness for solutions of \eqref{ODE} when the drift is not Lipschitz. 
To overcome this difficult, Ambrosio in \cite{Ambrosio04} introduced  the notion of \textit{regular Lagrangian flow} (see \autoref{def:Regularlagrangianflow}) and established a link between well-posedness of the Cauchy problem \eqref{CE} and existence and uniqueness for regular Lagrangian flows. He also showed well-posedness in $L^{\infty}$ for \eqref{CE} when the vector field has the $BV$ spatial regularity and bounded divergence (it means $\div b_t\ll \leb^d$, with density in $L^{\infty}$); this result provides an important extension of the celebrated DiPerna-Lions theory \cite{lions}.
\paragraph*{}

In the last years the study of \textit{propagation of regularity} and \textit{bound of the rate of mixing} for solutions of \eqref{CE} has received a lot of attentions. At an informal level it consists in the study of the evolution of suitable norms, or functionals, along solutions of the continuity equation. For instance, in order to quantify the propagation of regularity one can study the evolution of Sobolev norms, $BV$ norms or weaker ones, while the ``mixing level'' of a scalar can be studied by means of negative Sobolev norms or geometrical functionals (see \cite{Bressan03, HSSS18}). 

We refer to \cite{AlbertiCrippaMazzuccato16,IyerKiselevXu14,HSSS18,Seis13} for an overview of the topic of mixing, and we are going to focus mostly on the regularity side of the problem.

In the smooth setting the picture is quite clear: the flow map $X_t$ and its inverse inherit the Lipschitz regularity of the vector field, precisely it holds
\begin{equation*}
	e^{-tL}|x-y|\le |X_t(x)-X_t(y)|\le e^{tL}|x-y|
	\qquad\forall x,y\in \setR^d,\ t\in [0,T],
\end{equation*}
where $L$ is the Lipschitz constant of $b$, uniform in time. This estimate can be immediately turned into
\begin{equation}\label{w1}
	\norm{\nabla u_t}_{L^{\infty}} \leq e^{tL} \norm{\nabla \bar u}_{L^{\infty}}\qquad \forall t\in [0,T],
\end{equation}
where $u_t$ is the unique solution of \eqref{CE} with initial data $\bar u\in W^{1,\infty}(\setR^d)\cap L^{\infty}(\setR^d)$, using the \textit{Lagrangian identity} \eqref{z12} and the incompressibility condition $\div b=0$.
Moreover it is a simple exercise to see that \eqref{w1} is sharp building a smooth divergence-free vector field $v$ and a solution of \eqref{CE}, associated to $v$, that increases the Lipschitz constant in time with exponential rate.

In other words the Lipschitz seminorm increases at most exponentially fast in time along solutions of the incompressible continuity equation drifted by Lipschitz velocity fields, and this rate is sharp.

If one consider vector fields of Sobolev class the situation is much more complicated and new wild phenomenas come up. 
The Lipschitz and Sobolev regularity, even of fractional order, of the initial data might be instantaneously lost during the time evolution (see \cite{AlbertiCrippaMazzuccato18, AlbertiCrippaMazzuccato16, AlbertiCrippaMazzuccato14}) but some very weak notion of regularity seems to be propagated also in this case.

A first positive result has been established by Crippa and De Lellis in  \cite{CrippaDeLellis08}, where they shown a quantitative Lusin-Lipschitz estimate for Lagrangian flows. This estimate allows to prove the propagation of the ``Lipexp'' regularity. In \cite{BreschJabin15} the authors proved that a suitable singular operator, evaluated on $u_t$, remains bounded during the time evolution, and by means of this regularity result they were able to built a new theory of existence of solutions to the compressible Navier-Stokes equation.
Using very sophisticated tools from harmonic analysis, in \cite{LegerFlavien16}, L\'eger studied the time evolution of
\begin{equation}\label{z13}
	\int_{\setR^d} |\log(|\xi|) \hat u_t(\xi)^2 \di \xi,
	\qquad
	\int_{\setR^d} \log(|\xi|)^2 \hat u_t(\xi)^2 \di \xi,
\end{equation}
where $\hat u_t$ denotes the Fourier transform of the solution to \eqref{CE} at time $t\in [0,T]$. He proved that, if  $b$ is divergence-free and belongs to $ W^{1,p}$, for some $p>1$ uniformly in time, then the first functional in \eqref{z13} increases at most linearly fast in time. Assuming a better regularity on the drift, i.e the $W^{1,p}$ bound with $p\geq 2$, the second functional increases at most quadratically fast.

\paragraph*{}

The main result in the present paper is a sharp characterization of the regularity for solutions of \eqref{CE} associated to incompressible Sobolev vector fields with exponent $p>1$. We study the propagation of regularity by means of the what we call \textit{log-Sobolev functionals} of order $p>0$:
\begin{equation}\label{eq: log-Sobolev norm}
\left(\int_{B_{1/3}} \int_{\setR^d} \frac{|f(x+h)-f(x)|^2}{|h|^d}\frac{1}{\log(1/|h|)^{1-p}} \di x \di h\right)^{1/2}.
\end{equation}
{\color{red}Inequality} \eqref{eq: log-Sobolev norm} is inspired by the well-known Gagliardo semi-norm
\begin{equation*}
\left(
   \int_{\setR^d} \int_{\setR^d} \frac{|f(x+h)-f(x)|^2}{|h|^{d+2s}} \di x \di h
\right)^{1/3}\qquad
s\in (0,1),
\end{equation*}
that aims at measure the ``size'' in $L^2$ of the $s$-derivative (i.e. derivative of order $s$) of $f$. At an intuitive level it is clear that replacing $|h|^{-2s}$ by $\log(1/|h|)^{1-p} \mathbf{1}_{B_{1/3}}(h)$ we are studying the ``$\log$-derivative'' of $f$, this justifies the name log-Sobolev functionals. This intuitive idea is also supported by the equivalence
\begin{equation}\label{interpolation-ine}
	\int_{B_{1/2}}\int_{\setR^d}\frac{|f(x+h)-f(x)|^2}{|h|^d}\frac{1}{\log(1/|h|)^{1-p}}\di x \di h  
	\simeq_{d,p}
	\int_{|\xi|\ge 10}\log(|\xi|)^p |\hat f(\xi)|^2\di \xi +\int_{|\xi|\leq 10}|\xi|^2|\hat f(\xi)|^2\di \xi,
\end{equation}
for every $f\in L^2(\setR^d)$, that is proven in \cite{BrueNguyen18}. In particular log-Sobolev functionals are comparable with the ones in \eqref{z13} when $p=1$ and $p=2$.
Our main result is the following.

\begin{theorem}\label{thm: intro}
	Let $p>1$ be fixed.
	Let us consider a bounded (in space and time) divergence-free vector field $b\in L^1([0,T]; W^{1,p}(\setR^d;\setR^d))$.
	
	Then for every initial data $u_0\in BV(\setR^d)$ with $\norm{\bar u}_{L^{\infty}}\leq 1$ the (unique) solution $u\in L^{\infty}([0,T]\times \setR^d)$ of \eqref{CE} satisfies
	\begin{equation}\label{z14}
	\int_{B_{1/3}} \int_{\setR^d} \frac{|u_t(x+h)-u_t(x)|^2}{|h|^d}\frac{1}{\log(1/|h|)^{1-p}} \di x \di h \lesssim_{p,d} \left(\int_0^t\norm{\nabla b_s}_{L^p} \di s\right)^p+  \norm{\bar{u}}_{BV}^p+\norm{\bar{u}}_{L^1}.
	\end{equation}

    Moreover, there exist a divergence-free vector field $b\in L^\infty([0,+\infty); W^{1,p}(\mathbb{R}^d))$ and an initial data $\bar{u}\in L^\infty(\setR^d)\cap W^{1,d}(\mathbb{R}^d)$, such that the unique solution $u\in L^\infty([0,+\infty)\times\mathbb{R}^d)$ of the Cauchy problem \eqref{CE} satisfies
	\begin{equation}
	\int_{B_{1/2}}\int_{\mathbb{R}^d}\frac{|u_t(x+h)-u_t(x)|^2}{|h|^d}\frac{1}{\log(1/|h|)^{\gamma}} \di x\di h=\infty,
	\qquad \forall t>0,
	\end{equation}
	for any $\gamma<1-p$.
\end{theorem}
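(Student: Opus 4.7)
The plan is to split the theorem into its two halves and attack each separately. For the quantitative bound \eqref{z14} I would first reduce to smooth $b$ and $\bar u$ by standard approximation, then use the Lagrangian representation $u_t = \bar u \circ X_{-t}$ together with the incompressibility of the flow to change variables in the Gagliardo-type integral on the left-hand side. This turns the question into estimating the distortion $\Phi_t(y,h) := X_{-t}(X_t(y) + h) - y$ of a translation by $h$ under the flow. A Crippa--De Lellis / Bresch--Jabin style maximal-function argument provides
\begin{equation*}
|X_{-t}(X_t(y) + h) - y| \leq |h| \exp\Bigl(C \int_0^t M(\nabla b_s)(X_s(y)) \, \di s \Bigr)
\end{equation*}
off an exceptional set of controllable Lebesgue measure $\lambda$. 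On the good set I would then use $\int |\bar u(y+\eta) - \bar u(y)| \, \di y \lesssim |\eta| \norm{\bar u}_{BV}$, and on the bad set the crude $L^\infty$ bound. The logarithmic weight $\log(1/|h|)^{p-1}$ is precisely what is needed to absorb the exponential distortion provided $\lambda$ is tuned to a suitable inverse power of $\log(1/|h|)$.

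For the counterexample I would pass through the Fourier characterization \eqref{interpolation-ine}, which rephrases the claim as the existence of $u_t$ with $\int_{|\xi| \geq 10} \log(|\xi|)^{1-\gamma} |\hat u_t(\xi)|^2 \, \di \xi = \infty$ for every $1 - \gamma > p$ and every $t > 0$. A natural first attempt is a stationary divergence-free radial vortex in two dimensions, $b(x) = \omega(|x|) x^\perp$, with $\omega$ of the form $(\log(1/r))^\sigma$ for the critical exponent $\sigma$ placing $b$ on the boundary of $W^{1,p}$. The induced flow rotates each circle of radius $r$ at angular speed $\omega(r)$, so a non-radial smooth initial datum $\bar u(x) = \chi(|x|) g(\theta)$ produces the spiral $u_t(r,\theta) = \chi(r) g(\theta - t\omega(r))$, whose radial wavenumber grows like $t|\omega'(r)|$ as $r \to 0$.

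The main obstacle I anticipate is in sharpness. A single radial spiral has its anomalous behavior concentrated near one point, and a direct computation of the Gagliardo integrand at a point of radius $r$ yields a pointwise contribution of order $\log(1/r)^{|\gamma|+1}$, which, integrated against the $d$-dimensional volume element $r^{d-1} \, \di r$, is summable at the origin; the log-Sobolev functional then remains finite. To force divergence at every positive time, the singular behavior must be spread over a set of positive measure: either via a multi-scale / self-similar arrangement of nested vortices at scales $2^{-k}$, or, probably more cleanly, via a shear flow $b(x,y) = (f(y),0)$ with $f'$ of critical integrability, so that stationary-phase analysis of $\widehat{e^{-iNtf(y)}}$ exhibits the sharp logarithmic Fourier tail for every $t>0$. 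Carrying out this tail analysis rigorously, while simultaneously arranging $\bar u$ to lie in $L^\infty \cap W^{1,d}$, is the delicate core of the construction.
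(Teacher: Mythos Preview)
Your change-of-variables setup is fine, but there is a genuine gap at the step where you invoke $\int |\bar u(y+\eta)-\bar u(y)|\,\di y \lesssim |\eta|\,\norm{\bar u}_{BV}$. That inequality is for a \emph{constant} translation $\eta$, whereas after your change of variables the shift $\Phi_t(y,h)=X_{-t}(X_t(y)+h)-y$ depends on $y$; even on the good set you only control $|\Phi_t(y,h)|$, not its direction or its variation in $y$, so the integrated BV shift bound does not apply. The paper never writes that estimate: it combines the flow's Lusin--Lipschitz bound with the \emph{pointwise} maximal inequality $|\bar u(z)-\bar u(w)|\lesssim |z-w|\bigl(M|D\bar u|(z)+M|D\bar u|(w)\bigr)$ to produce a single pointwise bound
\[
|u_t(x)-u_t(y)|\le |x-y|\,e^{\tilde g_t(x)+\tilde g_t(y)},\qquad \norm{\tilde g_t}_{L^p}\lesssim \int_0^t\norm{\nabla b_s}_{L^p}\,\di s + \norm{\bar u}_{BV},
\]
and then proves a general lemma (Cavalieri's formula on the level sets of $\tilde g$) converting any such exponential Lusin--Lipschitz inequality into the log-Sobolev bound. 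Your good-set/bad-set splitting in $h$ is morally the same idea; you can repair your argument by replacing the integrated BV estimate with the pointwise maximal estimate for $\bar u$, but then you have essentially reproduced the paper's route.

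\textbf{Part 2 (the counterexample).} Your diagnosis that a single spiral concentrates the singularity at a point and hence gives a finite functional is correct, and ``multi-scale'' is the cure. But the paper's construction differs from both of your suggestions: it uses neither nested vortices nor a shear, and it does not pass through the Fourier characterisation \eqref{interpolation-ine}. Instead it takes the Alberti--Crippa--Mazzucato building block---a smooth divergence-free $v$ on the unit cube whose solution $\rho$ satisfies $\norm{\rho_t}_{\dot H^{-1}}\lesssim e^{-ct}$---and places rescaled copies on \emph{disjoint} cubes of side $\lambda_n=e^{-n}$, with time rescaling $\tau_n=(n^2 e^{-dn})^{1/p}$ tuned so that $\sum_n \lambda_n^d/\tau_n^p<\infty$ (giving $b\in L^\infty_t W^{1,p}_x$). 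The analytical engine is a new interpolation inequality
\[
\norm{f}_{L^2}^2 \lesssim \frac{1}{|\log(\delta\lambda)|^{1-\gamma}} \int_{B_{1/(5\delta)}}\!\int \frac{|f(x+h)-f(x)|^2}{|h|^d|\log(\delta|h|)|^\gamma}\,\di x\,\di h + |\log\lambda|\,\frac{\norm{f}_{L^2}^2}{\log\bigl(2+\norm{f}_{L^2}/\norm{f}_{\dot H^{-1}}\bigr)},
\]
which, applied at scale $\delta=\lambda_n$ and combined with the exponential decay of $\norm{\rho_{t/\tau_n}}_{\dot H^{-1}}$, forces the $n$-th block's contribution to the $\gamma$-functional to be $\gtrsim \gamma_n^2\lambda_n^d\tau_n^{\gamma-1}$; the parameters make this diverge exactly when $\gamma<1-p$. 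Your shear-flow stationary-phase idea may also work, but the paper replaces that delicate computation by this softer $\dot H^{-1}$ interpolation.
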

If we further assume $b\in L^{\infty}([0,T]; W^{1,p}(\setR^d;\setR^d))$ with $p>1$, then \eqref{z14} ensures that the log-Sobolev functional \eqref{eq: log-Sobolev norm} of order $p$ increases in time at most polynomially fast with exponent $p$. Also this rate is sharp as is shown in \autoref{thm:sharpness2}.

We refer to \autoref{sec:reg} and \autoref{sec:counterexamples} for technical remarks on \autoref{thm: intro} concerning the boundedness assumption on the vector field, the regularity of the initial data and simple generalizations to the case of velocity fields with nonzero divergence.

Let us now spend a few words explaining the strategy of the proof of \autoref{thm: intro}.
For what concerns the first part of \autoref{thm: intro} our starting point has been the Lusin-Lipschitz estimate for Lagrangian flows obtained by Crippa and De Lellis in \cite{CrippaDeLellis08}. 
Indeed we show that a suitable version of this estimate (see \autoref{prop: regularity RLF}) implies \eqref{z14} by means of a general result (\autoref{prop: generalFact}) that has the aim to link a notion of ``having a logarithm Sobolev derivative'' written in terms of Lusin-Lipschitz property with a quantitative estimate in terms of log-Sobolev functionals.

The second part of \autoref{thm: intro} builds upon a variant of the construction proposed by Alberti Crippa and Mazzucato in \cite{AlbertiCrippaMazzuccato16} (see also \cite{AlbertiCrippaMazzuccato14} and \cite{AlbertiCrippaMazzuccato18}).
The main new technical tool we introduce is the interpolation inequality proved in \autoref{interpolation} (see also \autoref{cor: interpolation}) that links the log-Sobolev functionals \eqref{eq: log-Sobolev norm} of a function with its $L^2$ and $\dot H^{-1}$ norms.
As a byproduct of \autoref{thm: intro} and the just mentioned interpolation inequality we are able to recover the sharp bound on  ``mixing'' for vector fields with uniformly bounded $W^{1,p}$ norm, with $p>1$. This well-known result (see for instance \cite[Theorem 6.2]{CrippaDeLellis08}, \cite{IyerKiselevXu14}, \cite{HSSS18}, \cite{Seis13}, \cite{LegerFlavien16}) is proved in \autoref{prop: mixing estimate}.

\paragraph*{}

The remaining part of this paper is organized as follows. In \autoref{sec:reg} we deal with the first part of \autoref{thm: intro}, see also \autoref{thm:regularity result}. The second part of the paper, that is to say \autoref{sec:counterexamples}, is devoted to the proof of the second part of the \autoref{thm: intro} (see also \autoref{thm: sharpness}) and of \autoref{thm:sharpness2}. In this section we also collect two mixing estimates (see \autoref{prop: mixing estimate}) obtained as a byproduct of the previously developed theory.

\paragraph{Notation.}

We denote by $\setR^d$ the Euclidean space of dimension $d$ endowed with the Lebesgue measure $\leb^d$ and the Euclidean norm $|\cdot |$. $B_r(x)$ denotes the ball of radius $r>0$ centered at $x\in\setR^d$ and We often write $B_r$ instead of $B_r(0)$. $L^p=L^p(\setR^d)$ are standard Lebesgue spaces of $p$-integrable functions while $W^{1,p}(\setR^d)$ stands for the Sobolev space of functions endowed with the norm $\norm{f}_{W^{1,p}}^p=\norm{f}_{L^p}^p+\norm{\nabla f}_{L^p}^p$.

We set
\begin{equation*}
	\dashint_E f\di x=\frac{1}{\leb^d(E)} \int_E f \di x,\qquad \forall~ E\subset \setR^d\  \text{Borel set},
\end{equation*}
and
\begin{equation*}
	Mf(x):=\sup_{r>0} \dashint_{B_r(x)} |f(y)| \di y,
	\qquad
	\forall~x\in \mathbb{R}^d,
\end{equation*}
to denote the Hardy-Littlewood maximal function.

We often use the expression $a\lesssim_c b$ to mean that there exists a universal constant $C$ depending only on $c$ such that $a\leq C b$. The same convention is adopted for $\gtrsim_c$ and $\simeq_c$.


\section{Regularity result}\label{sec:reg}

In this section we deal with the positive part of \autoref{thm: intro} that is restated below for the reader convenience.

\begin{theorem}\label{thm:regularity result}
	Let $p>1$ be fixed.
	Let us consider a bounded (in space and time) divergence-free vector field $b\in L^1([0,T]; W^{1,p}(\setR^d;\setR^d))$.
	
	Then for every initial data $\bar{u}\in BV(\setR^d)$ with $\norm{\bar u}_{L^\infty}\le 1$ the (unique) solution $u\in L^{\infty}([0,T]\times \setR^d)$ of \eqref{CE} satisfies
	\begin{equation}\label{eq: LogSobolevContinuity}
	\int_{B_{1/3}} \int_{\setR^d} \frac{|u_t(x+h)-u_t(x)|^2}{|h|^d}\frac{1}{\log(1/|h|)^{1-p}} \di x \di h \lesssim_{p,d} \left(\int_0^t\norm{\nabla b_s}_{L^p} \di s\right)^p+ \norm{\bar{u}}_{BV}^p+\norm{\bar{u}}_{L^1}.
	\end{equation}
\end{theorem}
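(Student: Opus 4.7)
\emph{Overall strategy.} The plan is to exploit the Lagrangian representation of the unique bounded solution and combine two Lusin-Lipschitz-type ingredients: the Crippa--De Lellis regularity estimate for the backward flow $X_t^{-1}$, and the abstract conversion principle (the general result announced in the introduction) that upgrades a Lusin-Lipschitz bound into an estimate on the log-Sobolev functional appearing in \eqref{eq: LogSobolevContinuity}. Since $b$ is bounded, divergence-free and of class $W^{1,p}$, Ambrosio's theory supplies a unique measure-preserving regular Lagrangian flow $X_t$, and the unique bounded solution of \eqref{CE} is $u_t = \bar{u}\circ X_t^{-1}$. Hence all the spatial regularity of $u_t$ is inherited, via composition, from the $BV$ regularity of $\bar{u}$ and the Lusin-Lipschitz regularity of $X_t^{-1}$.

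\emph{A composite Lusin-Lipschitz estimate for $u_t$.} First I would use the classical maximal-function inequality for BV functions: for a.e.\ $a,b\in\setR^d$,
\[
|\bar{u}(a)-\bar{u}(b)|\lesssim_d (M|D\bar{u}|(a)+M|D\bar{u}|(b))\,|a-b|.
\]
Next I would invoke the Crippa--De Lellis estimate for the backward flow, producing $g_t\geq 0$ with $\norm{g_t}_{L^p(\setR^d)}\lesssim_{p,d}\int_0^t\norm{\nabla b_s}_{L^p}\di s$ and
\[
|X_t^{-1}(x)-X_t^{-1}(y)|\leq |x-y|\,e^{g_t(x)+g_t(y)}\qquad \text{for a.e. } x,y.
\]
Chaining these two estimates with $a=X_t^{-1}(x)$, $b=X_t^{-1}(y)$, and introducing $F_t:=M|D\bar{u}|\circ X_t^{-1}$ (a function equidistributed with $M|D\bar{u}|$ since $X_t$ is measure-preserving), yields the composite Lusin-Lipschitz inequality
\[
|u_t(x)-u_t(y)|\lesssim_d (F_t(x)+F_t(y))\,|x-y|\,e^{g_t(x)+g_t(y)}.
\]

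\emph{From Lusin-Lipschitz to log-Sobolev, and the main obstacle.} I would then feed this pointwise control into the general conversion proposition. Its mechanism is, roughly, the following: at each dyadic scale $|h|\sim r$ decompose $\setR^d$ into a good set where $r(F_t(x)+F_t(y))e^{g_t(x)+g_t(y)}\lesssim 1$ and its complement; on the good set $|u_t(x+h)-u_t(x)|^2$ is controlled pointwise by the Lusin-Lipschitz estimate, while on the complement one uses $\norm{u_t}_\infty\leq 1$ to estimate $|u_t(x+h)-u_t(x)|^2\leq 2$. Integrating against the weight $|h|^{-d}\log(1/|h|)^{p-1}$ and summing over dyadic $r$, one trades the Lebesgue measure of the bad set for a factor $\log(1/|h|)^{-p}$ via layer-cake and distributional estimates based on $g_t,F_t\in L^p_{\rm weak}$; this extra factor exactly cancels the singular log-weight and produces the $p$-th powers appearing on the right-hand side of \eqref{eq: LogSobolevContinuity}. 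The contribution coming from $|h|$ close to $1/3$, where the logarithmic weight is tame, is handled separately through $|u_t(x+h)-u_t(x)|^2\leq 2|u_t(x+h)-u_t(x)|$ and the trivial $L^1$ modulus of continuity $\int|u_t(x+h)-u_t(x)|\di x\lesssim \norm{\bar{u}}_{L^1}$, which yields the last summand on the right. The main difficulty, and the real content of the conversion proposition, is that the weights $e^{g_t}$ and $M|D\bar{u}|$ are only controlled in weak-$L^p$ and weak-$L^1$ senses respectively: converting these two weak losses into exactly the sharp exponent $p-1$ on the logarithm (and the $p$-th power on the right-hand side) requires a careful simultaneous level-set analysis of $g_t$ and $F_t$, and this balancing act is where the heart of the argument lies.
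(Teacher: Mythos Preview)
Your proposal follows exactly the paper's strategy --- Lagrangian representation, Crippa--De Lellis for the inverse flow, the BV maximal inequality for $\bar u$, and then a conversion lemma from pointwise Lusin--Lipschitz to the log-Sobolev functional --- but you overcomplicate the last step. You keep two separate weights, the polynomial factor $F_t=M|D\bar u|\circ X_t^{-1}$ (in weak-$L^1$) and the exponential factor $e^{g_t}$ (with $g_t\in L^p$), and anticipate a ``careful simultaneous level-set analysis of $g_t$ and $F_t$''. The paper avoids this entirely: since $F_t$ lies in weak-$L^1$, the function $\log\bigl(\max\{C_d F_t,1\}\bigr)$ has \emph{exponentially} decaying superlevel sets and therefore belongs to every $L^q$, in particular to $L^p$, with $p$-th power of the norm controlled by $\norm{\bar u}_{BV}$. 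Hence one simply absorbs $F_t$ into the exponent, setting
\[
\tilde g_t:=2g_t+2\log\bigl(\max\{C_d F_t,1\}\bigr)\in L^p,\qquad \norm{\tilde g_t}_{L^p}\lesssim_{p,d}\int_0^t\norm{\nabla b_s}_{L^p}\di s+\norm{\bar u}_{BV},
\]
so that $|u_t(x)-u_t(y)|\le|x-y|\,e^{\tilde g_t(x)+\tilde g_t(y)}$ with a \emph{single} exponential weight. The conversion lemma (\autoref{prop: generalFact}) then becomes a clean one-variable Cavalieri computation on the level sets of $\tilde g_t$ alone, with no interaction between two weights to manage. Your two-weight analysis would presumably still go through, but it is strictly more laborious; this ``log trick'' is the simplification you are missing. (Incidentally, $g_t$ is in strong $L^p$, not merely weak-$L^p$, since $p>1$ and the maximal operator is bounded on $L^p$; and $F_t$ is in weak-$L^1$, not weak-$L^p$.)
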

Let us begin with a few technical remarks.
\begin{remark}\label{remark: sup-norm}
	Thanks to \autoref{remark: sup-norm2}, we also have 
	\begin{equation*}
	 \sup_{h\in B_{1/3}} \log(1/ |h|)^{p}\int_{\mathbb{R}^d}|u_t(x+h)-u_t(x)|^2\di x
	 \lesssim_{p,d}
	 \left(\int_0^t\norm{\nabla b_s}_{L^p} \di s\right)^p+ \norm{\bar{u}}_{BV}^p+\norm{\bar{u}}_{L^1}.
	\end{equation*}
	This will play a role in the study of the geometric mixing norm along solutions of \eqref{CE}, see \autoref{prop: mixing estimate}.
\end{remark}

\begin{remark}\label{remark: growth conditions}
	The assumption $b\in L^{\infty}([0,T]\times \setR^d)$ can be replaced with more general growth conditions, for instance one can ask
	\begin{equation}\label{eq: moreGeneralGrowthCondition}
		\frac{b(t,x)}{1+|x|}=b_1(t,x)+b_2(t,x)
		\quad
		\text{with  $b_1\in L^1([0,T]; L^1(\setR^d))$ and $b_2\in L^1([0,T]; L^{\infty}(\setR^d))$},
	\end{equation}
	compare with \cite[pg. 12]{CrippaDeLellis08}.
	
	Let us point out that some growth condition on $b$ is necessary to ensure the existence of a unique regular Lagrangian flow, see \cite{Ambrosio04, AmbrosioCrippa14, AmbrosioColomboFigalli15, CrippaDeLellis08} for detailed discussions on this topic.
\end{remark}

\begin{remark}
	The divergence-free condition on $b$ can be weakened assuming
	\begin{equation}\label{eq: bounded divergence}
		\exp\left\lbrace \int_0^T \norm{\div b_s}_{L^{\infty}} \di s\right\rbrace	=L<\infty,
	\end{equation}
	provided we consider the \textit{transport equation}
	\begin{equation}\label{TrE}
		 \begin{dcases}
		\partial_t u+b\cdot \nabla u =0,\\
		u_0=\bar{u},
		\end{dcases}\tag{TrE}
	\end{equation}
	instead of \eqref{CE}. The rigorous statement is the following.
	
	Let $p>1$ be fixed.
	Let us consider a bounded vector field $b\in L^1([0,T]; W^{1,p}(\setR^d;\setR^d))$ satisfying \eqref{eq: bounded divergence}.
	Then for every initial data $\bar{u}\in BV(\setR^d)$ with $\norm{\bar u}_{L^{\infty}}\le 1$ the (unique) solution $u\in L^{\infty}([0,T]\times \setR^d)$ of the transport equation \eqref{TrE} satisfies
	\begin{equation*}
	\int_{B_{1/3}} \int_{\setR^d} \frac{|u_t(x+h)-u_t(x)|^2}{|h|^d}\frac{1}{\log(1/|h|)^{1-p}} \di x \di h \lesssim_{p,d} L^p\left(\int_0^t\norm{\nabla b_s}_{L^p} \di s\right)^p+ \norm{\bar{u}}_{BV}^p+\norm{\bar{u}}_{L^1}.
	\end{equation*}
\end{remark}

\begin{remark}
	The regularity assumption on the initial data $\bar{u}\in BV(\setR^d)$ is very far from being sharp, indeed it can be immediately weakened, for instance assuming either $\bar u\in W^{s,1}(\setR^d)$ for $0<s\le 1$ or that $\bar u$ satisfies a Lusin-Lipschitz regularity condition as \eqref{eq: Lusin-Lip exp}. 
	We expect that the minimal assumption to ask is
	\begin{equation}
		\int_{B_{1/3}} \int_{\setR^d} \frac{|u_0(x+h)-u_0(x)|^2}{|h|^d}\frac{1}{\log(1/|h|)^{1-p}} \di x \di h<\infty,
	\end{equation}
	but it seems difficult to prove this using our techniques. For sake of simplicity and for consistency with the counterexample in \autoref{thm: sharpness} we prefer to consider just Sobolev or $BV$ initial data.		
\end{remark}

The proof of \autoref{thm:regularity result}, as has been said in the introduction, builds upon a well-known ingredient: the quantitative Lusin-Lipschitz estimate for Lagrangian flows associated to Sobolev fields, first introduced in \cite{AmbrosioLecumberryManiglia} and \cite{CrippaDeLellis08}. In \autoref{sec: Regularity of Lagrangian flows} we state and prove the aforementioned regularity result for Lagrangian flows (see \autoref{prop: regularity RLF}), in a suitable formulation.  As a corollary we get a quantitative Lusin-Lipschitz estimate for solutions of the continuity equation (see \autoref{cor: LusinLipschitz continuity} and compare with \cite[Theorem 5.3]{CrippaDeLellis08}). Eventually in \autoref{sec: key lemma} we establish a general result that links a suitable quantitative Lusin-Lipschitz property of a scalar function with an estimate of the log-Sobolev functional \eqref{eq: log-Sobolev norm}.

\subsection{Regularity of Lagrangian flows}\label{sec: Regularity of Lagrangian flows}
 In this subsection we present a regularity estimate for Lagrangian flows associated to Sobolev vector fields with exponent $p>1$. 
 Let us begin by recalling the definition of Ambrosio's regular Lagrangian flows.

\begin{definition}\label{def:Regularlagrangianflow}
	Let us fix a time dependent vector field $b\in L^1_{\loc}([0,T]\times \setR^d;\setR^d)$. We say that $X:[0,T]\times \setR^d\rightarrow \setR^d$ is a regular Lagrangian flow associated to $b_t$ (RLF for short) if the following conditions hold:
	\begin{itemize}
		\item [(i)] there exists an $\leb^d$-negligible set $N\subset\setR^d$ such that
		\begin{equation*}
		X_t(x)=x+\int_0^t b_s(X_s(x))\di s \qquad \text{for any $t\in [0,T]$ and $x\in\setR^d\setminus N$};
		\end{equation*}
		\item [(ii)] there exists $L>0$, called compressibility constant, such that
		\begin{equation*}
		(X_t)_{\sharp} \leb^d \leq L\leb^d,\qquad\text{for any $t\in [0,T]$}.
		\end{equation*}
	\end{itemize}
\end{definition}
Condition (i) is not surprising, just ensures that $t\to X_t(x)$ solves \eqref{ODE} for $\leb^d$-a.e. initial data. Condition (ii), instead, has the role to select ``good'' trajectories imposing that the flow cannot concentrate too much the reference measure $\leb^d$. It also plays an important technical role guaranteeing that the notion of RLF is stable under modifications of the vector field on a $\leb^d$-negligible set. More precisely if $b$ and $\bar{b}$ satisfy 
\begin{equation*}
	\leb^{d+1}(\set{(t,x):\ |b(t,x)-\bar{b}(t,x)|>0})=0,
\end{equation*}
then $X$ is a RLF associated to $b$ if and only if it is a RLF associated to $\bar{b}$.

\begin{remark}\label{remark: existence uniqueness RLF}
	 It has been shown in \cite[Theorem 6.2, Theorem 6,4]{Ambrosio04} that, under the $BV$ assumption on the vector field $\int_0^T \norm{b_s}_{BV} \di s<\infty$, the uniform bound on the negative part of the divergence (i.e.  $\div b_t\ll \leb^d$ and $\int_0^T \norm{[\div b_s]^-}_{L^{\infty}}\di s<\infty$) and some growth conditions (for instance $b\in L^{\infty}([0,T]\times \setR^d)$ see also, \autoref{remark: growth conditions} ) there exists a unique RLF. See also the recent \cite{Nguyen18-1} for a different proof.
	 
	 It is also possible to see that, assuming a bound on the whole divergence
	 \begin{equation*}
	 \exp\left\lbrace  \int_0^T \norm{\div b_s}_{L^{\infty}}\di s	\right\rbrace \leq L,
	 \end{equation*}
	 then condition (ii) in \autoref{def:Regularlagrangianflow} can be improved
	 \begin{equation}\label{invertibility}
	 1/L \leb^d \leq (X_t)_{\sharp} \leb^d \leq L\leb^d,\qquad\text{for every $t\in [0,T]$}.
	 \end{equation}
	 In particular if $b$ is divergence-free then $X_t$ is measure preserving.
\end{remark}

\begin{remark}\label{remark: technical fact}
	For convenience we use the following convention: any vector field is \textit{point-wise} defined taking the value $\lim_{r\to 0}\frac{1}{\omega_d r^d}\int_{B_r(x)} b_t(y)\di y$ when $x\in\setR^d$ is a Lebesgue point for $b_t$ and $0$ otherwise.
	
	This convention allows for a point-wise \textit{Lusin-Lipschitz maximal estimate} for Sobolev vector fields:
	 \begin{equation}\label{eq: Lusin-Lip classic}
	 	|b_t(x)-b_t(y)|\lesssim_d |x-y|(M |\nabla b_t|(x)+M|\nabla b_t|(y))
	 	\qquad \forall x,y \in \setR^d\ \text{for}\ \leb^1\text{-a.e.}\ t\in [0,T],
	 \end{equation}
	 where $M$ is the Hardy-Littlewood maximal function.
	 See \cite{Stein} for a proof of this result at the level of scalar Sobolev functions.
\end{remark}

    We are ready to state and prove the Lusin-Lipschitz regularity result for Lagrangian flows associated to $W^{1,p}$ vector fields with $p>1$, compare it with \cite[Poposition 2.3]{CrippaDeLellis08}.
    
\begin{proposition}\label{prop: regularity RLF}
	Let $b\in L^1([0,T]; W^{1,p}(\setR^d;\setR^d))$ for some $p>1$, and $X$ be a regular Lagrangian flow associated to $b$ with compressibility constant $L$.
	Then, there exists a measurable function $g_t(x)=g(t,x):[0,T]\times\setR^d \to \setR \cup \set{+\infty}$ such that
	\begin{equation}\label{z8}
	e^{-g_t(x)-g_t(y)}
	\leq
	\frac{|X_t(x)-X_t(y)|}{|x-y|}\leq e^{ g_t(x)+g_t(y)}
	\qquad\text{for any $x,y\in\setR^d$ and $t\in [0,T]$}
	\end{equation}
	and 
	\begin{equation}\label{z2}
	\norm{g_t}_{L^p}\lesssim_{p,d} L\int_0^t \norm{\nabla b_s}_{L^p} \di s\qquad \forall t\in [0,T].
	\end{equation}
	Moreover, if $b$ is divergence-free then we can take $L=1$. 
\end{proposition}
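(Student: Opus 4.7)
The plan is to adapt the quantitative Lusin--Lipschitz argument of Crippa--De Lellis \cite{CrippaDeLellis08} to produce the symmetric two-sided bound \eqref{z8}. The natural candidate for the auxiliary function is
\begin{equation*}
g_t(x) := C_d \int_0^t M|\nabla b_s|(X_s(x)) \di s,
\end{equation*}
where $C_d$ is the universal constant appearing in the pointwise Lusin--Lipschitz maximal estimate \eqref{eq: Lusin-Lip classic}, with the convention $g_t(x) := +\infty$ whenever the integral diverges; in this way both \eqref{z8} and \eqref{z2} become trivial outside a suitable full-measure set.

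For the pointwise bound, I would fix $(x,y)$ in the set of full measure in $\setR^d\times \setR^d$ where both $t\mapsto X_t(x)$ and $t\mapsto X_t(y)$ solve \eqref{ODE}, and differentiate
\begin{equation*}
\didi{t} \log |X_t(x)-X_t(y)| = \frac{(X_t(x)-X_t(y))\cdot (b_t(X_t(x))-b_t(X_t(y)))}{|X_t(x)-X_t(y)|^2}.
\end{equation*}
Taking absolute values and invoking \eqref{eq: Lusin-Lip classic} at the two moving points,
\begin{equation*}
\bigl| \didi{t} \log |X_t(x)-X_t(y)| \bigr| \lesssim_d M|\nabla b_t|(X_t(x)) + M|\nabla b_t|(X_t(y)),
\end{equation*}
so integration on $[0,t]$ yields $\bigl|\log(|X_t(x)-X_t(y)|/|x-y|)\bigr| \le g_t(x)+g_t(y)$, which is exactly \eqref{z8} after exponentiation.

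For the $L^p$ bound \eqref{z2}, I would apply Minkowski's integral inequality, then change variables using the compressibility condition $(X_s)_{\#}\leb^d \le L \leb^d$, and finally invoke the boundedness of the Hardy--Littlewood maximal operator on $L^p$ (this is the only place where the assumption $p>1$ is used):
\begin{equation*}
\norm{g_t}_{L^p} \lesssim_d \int_0^t \norm{M|\nabla b_s|(X_s(\plchldr))}_{L^p} \di s \lesssim_{p,d} L^{1/p} \int_0^t \norm{\nabla b_s}_{L^p} \di s.
\end{equation*}
Under the standing normalization $L \ge 1$, one has $L^{1/p}\le L$ and we recover the claimed bound; in the divergence-free case, \autoref{remark: existence uniqueness RLF} gives $(X_t)_{\#}\leb^d=\leb^d$, so $L=1$ and the constant reduces to $C_{p,d}$.

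The main technical subtlety lies not in any individual estimate but in the bookkeeping of exceptional sets: \eqref{eq: Lusin-Lip classic} holds at a.e.~$t$ and every $x,y$, while the ODE identity holds for every $t$ only for a.e.~starting point, and the composition $b_t(X_t(\plchldr))$ requires $X_t(x)$ to be a Lebesgue point of $b_t$. Selecting a common full-measure set in $\setR^d \times \setR^d$ on which the chain-rule identity for $\log|X_t(\plchldr)-X_t(\plchldr)|$ is genuinely justified requires a Fubini-type argument, using compressibility to push the $t$-a.e.~property of \eqref{eq: Lusin-Lip classic} back to the reference space along each of the two evaluations simultaneously.
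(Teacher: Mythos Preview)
Your proof is essentially the same as the paper's, and the choice $g_t(x)=C_d\int_0^t M|\nabla b_s|(X_s(x))\di s$ is exactly what the paper uses. The one difference is that the paper does not differentiate $\log|X_t(x)-X_t(y)|$ directly; instead it works with $\log\bigl(\eps+|X_t(x)-X_t(y)|\bigr)$ for $\eps>0$, obtains the bound with $\eps$ present, and then lets $\eps\to 0$. This regularization sidesteps the a~priori possibility that the two trajectories collide at some intermediate time (in which case your chain-rule identity is ill-defined), and it makes the computation valid for all $x,y\in\setR^d\setminus N$ and all $t$ without the Fubini-type bookkeeping you anticipate in your final paragraph: once $g_t$ is declared $+\infty$ on $N$, \eqref{z8} holds everywhere. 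Your observation that the change of variables actually produces $L^{1/p}$ rather than $L$ is sharper than the paper's stated bound.
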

\begin{proof}
	Let us take $N\subset\setR^d$ as in (i)\autoref{def:Regularlagrangianflow}, $\eps>0$, $x,y\in \setR^d\setminus N$ and $t\in [0,T]$, we have
	\begin{equation*}
	\abs{\log\left(\frac{\eps+|X_t(x)-X_t(y)| }{ \eps+|x-y| }\right)}
	=\abs{\int_0^t\frac{\di}{\di s} \log\left(\eps+|X_s(x)-X_s(y)|\right) \di s}
	\leq \int_0^t \frac{|b_s(X_s(x))-b_s(X_s(y))|}{|X_s(x)-X_s(y)|} \di s.
	\end{equation*}
	Using the Lusin maximal estimate \eqref{eq: Lusin-Lip classic} an letting $\eps\to 0$ we get
	\begin{equation*}
	\abs{\log\left(\frac{|X_t(x)-X_t(y)| }{|x-y| }\right)} \leq C_d \int_0^t M |\nabla b_s|(X_s(x))\di s+C_d\int_0^t M|\nabla b_s|(X_s(y)) \di s.
	\end{equation*}
	Set $g_t(x):=C_d\int_0^t M |\nabla b_s|(X_s(x))\di s$ when $x\in \setR^d\setminus N$ and $g_t(x):=+\infty$ otherwise. The condition (ii)\autoref{def:Regularlagrangianflow}, the boundness of the maximal function between $L^p$ spaces when $p>1$, together with Minkowski's inequality (see \cite[Appendix]{Stein}), yields \eqref{z2}. The proof is complete.
\end{proof}

\begin{remark}
	It is clear from the proof of \autoref{prop: regularity RLF} that $g_t$ can be taken independent of $t$, simply considering $g_T:= C_d\int_0^T  M |\nabla b_s|(X_s(x))\di s$.
\end{remark}
As a simple consequence of \autoref{prop: regularity RLF} we get a Lusin-Lipschitz estimate for solutions of the continuity equation \eqref{CE}, compare with \cite[Theorem 5.3]{CrippaDeLellis08}.

\begin{corollary}\label{cor: LusinLipschitz continuity}
	Let $b\in L^1([0,T]; W^{1,p}(\setR^d;\setR^d))$ be bounded and divergence-free with $p>1$.	
    Then, there exists a measurable function $\tilde g_t(x)=\tilde g(t,x):[0,T]\times \setR^d\to \setR\cup \set{+\infty}$, such that, for every initial data $\bar{u}\in BV(\setR^d)\cap L^{\infty}(\setR^d)$,  there exists a representative $u:[0,T]\times \setR^d\to \setR^d$ of the (unique) solution in $L^{\infty}([0,T]\times\setR^d)$ to \eqref{CE} satisfying
        \begin{equation}\label{z3'a}
        |u_t(x)-u_t(y)|\leq |x-y|\exp\left\lbrace \tilde{g}_t(x)+\tilde{g}_t(y)\right\rbrace,
        \qquad
        \forall x,y\in \setR^d,\ \ \forall t\in [0,T],
        \end{equation}
    and
        \begin{equation}\label{z3'}
        \norm{\tilde{g}_t}_{L^p}\lesssim_{p,d} \int_0^t \norm{\nabla b_s}_{L^p} \di s+\norm{\bar{u}}_{BV}\qquad \forall t\in [0,T].
        \end{equation}	
\end{corollary}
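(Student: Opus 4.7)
My plan is to reduce the Corollary to two Lusin--Lipschitz ingredients, composed through the Lagrangian representation of $u_t$. Since $b$ is bounded and divergence-free, \autoref{remark: existence uniqueness RLF} furnishes a unique measure-preserving regular Lagrangian flow $X_t$; the identity~\eqref{z12} then yields the pointwise representative $u_t(x)=\bar u(X_t^{-1}(x))$ for a.e.\ $x$, where $\bar u$ is taken at its Lebesgue-point values and $X_t^{-1}$ is the a.e.\ inverse of $X_t$ (which is also measure-preserving). On this representative I will combine the BV analogue of \eqref{eq: Lusin-Lip classic},
\begin{equation*}
|\bar u(a)-\bar u(b)|\leq C_d|a-b|\bigl(M|D\bar u|(a)+M|D\bar u|(b)\bigr),\qquad a,b\in\setR^d\setminus N,
\end{equation*}
with the bound for $X_t^{-1}$ obtained by applying \autoref{prop: regularity RLF} with $L=1$ and substituting $(a,b)=(X_t^{-1}(x),X_t^{-1}(y))$ in \eqref{z8}, namely
\begin{equation*}
|X_t^{-1}(x)-X_t^{-1}(y)|\leq |x-y|\exp\bigl(g_t(X_t^{-1}(x))+g_t(X_t^{-1}(y))\bigr).
\end{equation*}

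Chaining these two inequalities and writing $a=X_t^{-1}(x)$, $b=X_t^{-1}(y)$ I obtain
\begin{equation*}
|u_t(x)-u_t(y)|\leq C_d|x-y|\,e^{g_t(a)+g_t(b)}\bigl(M|D\bar u|(a)+M|D\bar u|(b)\bigr).
\end{equation*}
To put the maximal-function factor inside a single exponential, I would invoke the elementary inequality $A+B\leq(1+A)(1+B)=e^{\log(1+A)+\log(1+B)}$ for $A,B\geq 0$, which naturally suggests the definition
\begin{equation*}
\tilde g_t(x):=g_t(X_t^{-1}(x))+\log\bigl(1+C_d\,M|D\bar u|(X_t^{-1}(x))\bigr),
\end{equation*}
extended to $+\infty$ on the exceptional null set. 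With this choice \eqref{z3'a} is automatic.

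The hard part will be the $L^p$ estimate \eqref{z3'}. Measure-preservation of $X_t^{-1}$ eliminates the compositions: $\|g_t\circ X_t^{-1}\|_{L^p}=\|g_t\|_{L^p}$, already controlled by \autoref{prop: regularity RLF}, and the remaining contribution reduces to $\|\log(1+C_d\,M|D\bar u|)\|_{L^p(\setR^d)}$. The obstruction is that $M|D\bar u|$ lies only in \emph{weak} $L^1$, since $|D\bar u|$ is merely a finite Radon measure of total mass $V=|D\bar u|(\setR^d)\leq\|\bar u\|_{BV}$, so no direct $L^p$ bound on $M|D\bar u|$ is available. I would pass to the distribution function, splitting the integral at $\{M|D\bar u|\leq 1\}$ (using $\log(1+t)\leq t$) and $\{M|D\bar u|>1\}$ (using $\log(1+t)\lesssim \log t$), and applying the weak-type bound $|\{M|D\bar u|>s\}|\lesssim V/s$ in each region. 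A short layer-cake computation then yields $\|\log(1+C_d\,M|D\bar u|)\|_{L^p}^p\lesssim_{p,d} V\leq\|\bar u\|_{BV}$, which combined with the estimate on $g_t$ gives \eqref{z3'} (absorbing the harmless $V^{1/p}$ versus $V$ discrepancy into the implicit constant).
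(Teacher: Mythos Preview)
Your argument is correct and follows the same route as the paper's: represent $u_t=\bar u\circ Y_t$ with $Y_t=X_t^{-1}$ the measure-preserving inverse of the RLF, chain the BV Lusin--Lipschitz inequality for $\bar u$ with the two-sided bound \eqref{z8} for $X_t$ from \autoref{prop: regularity RLF} (substituted at $(X_t^{-1}(x),X_t^{-1}(y))$), and absorb the maximal-function factor into the exponential. The paper takes $\tilde g_t(x)=2\bar g_t(x)+2\log\bigl(\max\{C_d M|D\bar u|(Y_t(x)),1\}\bigr)$, which is equivalent to your choice $\log(1+C_d M|D\bar u|)$ up to constants.

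Where the two write-ups differ is in justifying the $L^p$ bound on the logarithmic term. The paper simply invokes ``the $L^p$ bound of the maximal function'', which does not literally apply since $|D\bar u|$ is only a finite measure; your layer-cake argument via the weak-$(1,1)$ inequality is the honest way to obtain $\|\log(1+C_d M|D\bar u|)\|_{L^p}^p\lesssim_{p,d}|D\bar u|(\setR^d)$ (the integral $\int_0^\infty \lambda^{p-1}(e^\lambda-1)^{-1}\,\di\lambda$ converges precisely because $p>1$). The discrepancy you flag between $V^{1/p}$ and $\|\bar u\|_{BV}$ is a genuine feature rather than something absorbable into the constant, but it reflects a cosmetic imprecision in the stated form of \eqref{z3'} rather than a gap in your proof; in the only downstream use (\autoref{prop: generalFact} and \autoref{thm:regularity result}) it is $\|\tilde g_t\|_{L^p}^p$ that enters, and there your bound gives exactly what is needed.
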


\begin{proof}
	Let $X$ be a unique regular Lagrangian flow associated to $b$. For any $t\in [0,T]$ the map $x\to X_t(x)$ is essentially invertible, i.e. there exists $Y:[0,T]\times \setR^d \to \setR^d$ such that
	\begin{equation}\label{z15}
		X(t,Y(t,x))=Y(t,X(t,x))=x \qquad \text{for}\ \leb^d\text{-a.e.}\ x\in \setR^d,
	\end{equation}
	see \cite[Theorem 6.2]{Ambrosio04}.
	It is immediate to check that $Y_t$ satisfies the inequality \eqref{z8} replacing $g_t(x)$ with $\bar g_t (x):=g_t(Y_t(x))$ when $x$ fulfills \eqref{z15} and $\bar g_t(x):=\infty$ otherwise. Using that $Y_t$ preserves $\leb^d$ (compare with \autoref{remark: existence uniqueness RLF}) it is immediate to verify that
	\begin{equation}\label{z3}
		\norm{\bar g_t}_{L^p}\lesssim_{p,d} \int_0^t \norm{\nabla b_s}_{L^p} \di s\qquad \forall t\in [0,T].
	\end{equation}
	Thus, up to modify again $\bar g_t$ on a negligible set, we get
	\begin{equation*}
	\frac{|u_t(x)-u_t(y)|}{|x-y|}\le C_d (M|\nabla \bar{u}|(Y_t(x))+M|\nabla \bar{u}|(Y_t(y)))\exp\left\lbrace\bar g_t(x)+\bar  g_t(y)\right\rbrace
	\qquad
	\forall x,y\in \setR^d,\ \  \forall t\in [0,T],
	\end{equation*}
	 where we used the $\leb^d$-a.e. identity $u_t=\bar u(Y_t)$ (it can be checked observing that $u_t(X_t(x))=\bar u (x)$ for $\leb^d$-a.e. $x\in \setR^d$) and \eqref{eq: Lusin-Lip classic} for $\bar u \in BV(\setR^d)$. 
	 
	 Finally observe that, for $x,y\in \setR^d$ and $t\in [0,T]$, one has
	\begin{equation*}
	C_d(M|\nabla \bar{u}|(Y_t(x))+M|\nabla \bar{u}|(Y_t(y))) \exp\left\lbrace \bar g_t(x)+ \bar g_t(y)\right\rbrace\leq \exp\left\{\tilde{g}_t(x)+\tilde{g}_t(x)\right\},
	\end{equation*}
	where $\tilde{g}_t(x)=\bar 2g_t(x)+2\log\left( \max\set{C_dM|\nabla\bar u|(Y_t(x));1} \right)$. This implies \eqref{z3'a}. Thanks to the $L^p$ bound of the maximal function (see \cite[Theorem 1]{Stein}), the fact that $Y_t$ is measure preserving and \eqref{z3} we obtain \eqref{z3'}. The proof is complete.
\end{proof}

We conclude this subsection by proving a converse of \autoref{prop: regularity RLF}. Roughly speaking we show that, for a given vector field $b\in L^1([0,T]\times \setR^d;\setR^d)$, the existence of a RLF that satisfies a version of the Lusin-Lipschitz estimate \eqref{z8} with exponent $p>1$ implies that $b$ is of class $W^{1,p}$. 
We are grateful to Luigi Ambrosio for having pointed this out to us.

Let us recall that a vector field $b\in L^1(\setR^d)$ belongs to $\text{BD}(\setR^d)$ if its distributional \textit{symmetric derivative} $Eb$ is a finite Radon measure, we refer to \cite{AmbrosioCosciaDalMaso97} for more explanations.

\begin{theorem}\label{th: converse}
	Let $b\in L^1([0,T]\times\setR^d; \setR^d)$ be fixed. If there exists a Regular Lagrangian flow $X_t$ associated to $b$ that satisfies
	\begin{align}\label{eq:important}
		\exp\left\lbrace -\int_s^t h_r(X_r(x))\di r-\int_s^t h_r(X_r(y))\di r\right\rbrace
		&\le \frac{|X_t(x)-X_t(y)|}{|X_s(x)-X_s(y)|}\\
		&\le \exp\left\lbrace \int_s^t h_r(X_r(x))\di r+\int_s^t h_r(X_r(y))\di r\right\rbrace
		\nonumber
	\end{align}
	for any $x,y\in\setR^d$ and $0\le s<t\le T$, where $h:[0,T]\times\setR^d\to [0,+\infty]$ fulfills
	\begin{equation}
		\int_0^T\norm{h_t}_{L^p}\di t<\infty,
		\qquad\text{for some $p\ge 1$,}
	\end{equation}
	then
	\begin{itemize}
		\item[(i)] when $p=1$, $b\in L^1([0,T];\text{BD}(\setR^d))$, and $\int_0^T\norm{b_t}_{BD}\le \int_0^T\norm{b_t}_{L^1}\di t+2\int_0^T\norm{h_t}_{L^1} \di t$;
		\item[(ii)] when $p>1$, $b$ admits a distributional derivative in $L^p$ for $\leb^d$-a.e. $t\in [0,T]$ 
		\begin{equation*}
			\int_0^T \norm{\nabla b_t}_{L^p}\di t\lesssim_{d,p} \int_0^T\norm{h_t}_{L^p}\di t.
		\end{equation*}
	\end{itemize}
\end{theorem}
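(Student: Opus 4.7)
The plan is to differentiate the two-sided flow estimate \eqref{eq:important} in time in order to obtain a pointwise Lusin--Lipschitz-type bound on the ``radial component'' of $b_t$, and then convert that into a distributional bound on the symmetric gradient $Eb_t$; for $p>1$ one finishes with Korn's inequality.

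\textbf{Step 1 (time differentiation).} Since $X$ satisfies the ODE, for each $(x,y)$ off a Lebesgue-negligible set the map $t\mapsto |X_t(x)-X_t(y)|^2$ is absolutely continuous with a.e. derivative $2(b_t(X_t(x))-b_t(X_t(y)))\cdot(X_t(x)-X_t(y))$. Taking logarithms in \eqref{eq:important}, dividing by $t-s$ and letting $t\to s^+$, one obtains, for $\leb^1$-a.e. $s\in[0,T]$ and $\leb^{2d}$-a.e. $(x,y)$:
\begin{equation*}
\left|\frac{(b_s(X_s(x))-b_s(X_s(y)))\cdot (X_s(x)-X_s(y))}{|X_s(x)-X_s(y)|^2}\right|\le h_s(X_s(x))+h_s(X_s(y)).
\end{equation*}

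\textbf{Step 2 (transfer + weak estimate on $Eb_s$).} The lower bound in \eqref{eq:important} forces $X_s$ to be essentially injective; together with the forward compressibility $(X_s,X_s)_\#\leb^{2d}\le L^2\leb^{2d}$, a change of variables transfers the estimate to $\leb^{2d}$-a.e. pairs $(x',y')$:
\begin{equation*}
\bigl|(b_s(x')-b_s(y'))\cdot(x'-y')\bigr|\le |x'-y'|^2\bigl(h_s(x')+h_s(y')\bigr).
\end{equation*}
For any $v\in S^{d-1}$ and $\varphi\in C^\infty_c(\setR^d)$, the distributional identity
\begin{equation*}
-\int (v\cdot b_s)\,\partial_v\varphi\di x=\lim_{\delta\to 0^+}\int\frac{(b_s(x)-b_s(x-\delta v))\cdot v}{\delta}\,\varphi(x)\di x
\end{equation*}
combined with the pointwise bound (applied with $y'=x-\delta v$) and H\"older gives
\begin{equation*}
\left|\int (v\cdot b_s)\,\partial_v\varphi\di x\right|\le 2\|h_s\|_{L^p}\|\varphi\|_{L^{p'}}.
\end{equation*}
Since $v^T(Eb_s)v=\partial_v(v\cdot b_s)$ distributionally, for $p=1$ this identifies $v^T(Eb_s)v$ as a Radon measure of total variation $\le 2\|h_s\|_{L^1}$, and for $p>1$ as an $L^p$ function of norm $\le 2\|h_s\|_{L^p}$. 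Polarization in $v$ recovers the full $Eb_s$ with an analogous bound. Integrating in $s$ proves (i).

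\textbf{Step 3 (Korn for $p>1$).} For (ii), Fourier inversion expresses $\widehat{\partial_k b_i}$ as a linear combination of the symbols $\xi_k\xi_j/|\xi|^2$ and $\xi_k\xi_i\xi_j\xi_l/|\xi|^4$ applied to $\widehat{Eb_s}$; these are homogeneous of degree zero and smooth off the origin, hence bounded Fourier multipliers on $L^p$ for $p\in(1,\infty)$. Consequently $\|\nabla b_s\|_{L^p}\lesssim_{d,p}\|Eb_s\|_{L^p}\lesssim_{d,p}\|h_s\|_{L^p}$, and time integration finishes.

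\textbf{Main obstacle.} The delicate step is the transfer in Step 2: forward compressibility alone does not guarantee that the image $X_s(\setR^d)$ has full Lebesgue measure, so moving the pointwise estimate from $(X_s(x),X_s(y))$-variables to generic $(x',y')\in\setR^{2d}$ requires carefully combining the essential injectivity (lower bound in \eqref{eq:important}) with Fubini; alternatively, one can bypass the pointwise form by testing directly against cylindrical test functions in the weak formulation used in Step 2. Note also that the argument only ever controls the symmetric part of $\nabla b_s$, which is why for $p=1$ the conclusion is $\text{BD}$ rather than $BV$, consistent with the failure of Korn at $L^1$.
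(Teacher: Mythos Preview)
Your proof follows essentially the same route as the paper: differentiate the two--sided flow inequality in time to obtain the ``radial'' Lusin bound
\[
\left|\frac{\langle b_r(x)-b_r(y),\,x-y\rangle}{|x-y|^2}\right|\le h_r(x)+h_r(y),
\]
deduce control of the symmetric gradient, and then invoke Korn for $p>1$. The only substantive technical difference is in how you pass from the radial pointwise bound to the bound on $Eb_r$. The paper does this via mollification (its Lemma~\ref{lemma:Sobolev}): convolving both $b_r$ and $h_r$ with a standard kernel, the pointwise inequality is preserved, and a Taylor expansion of the smooth $b_r^\eps$ gives $|\nabla_{\mathrm{sym}}b_r^\eps(x)|\le 2h_r^\eps(x)$ pointwise, from which the $L^p$ bound follows by passing to the limit. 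Your alternative---testing against difference quotients in a fixed direction $v$ and then polarizing to recover the full symmetric tensor---is correct and yields the same constants, but the mollification route is slightly cleaner since it avoids the polarization step and delivers the pointwise inequality $|\nabla_{\mathrm{sym}}b^\eps|\le 2h^\eps$ directly.

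You are right to flag the transfer from $(X_s(x),X_s(y))$ to generic $(x',y')$ as the delicate point; the paper also passes over this with a bare ``thus'', and as you note, forward compressibility alone does not guarantee full image. Your observation that the lower bound in \eqref{eq:important} forces essential injectivity is the key extra ingredient, though making the surjectivity argument fully rigorous (or, as you suggest, bypassing it by working directly in the weak formulation) does require a bit more care than either proof spells out.
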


\begin{remark}
	It is clear from the proof of \autoref{prop: regularity RLF} that, if $b\in L^1([0,T];W^{1,p})$ for some $p>1$ and $\div b\in L^{\infty}([0,T]\times\setR^d)$ then \eqref{eq:important} holds true with $h_r=M|\nabla b_r|\in L^p(\setR^d)$.
\end{remark}

\begin{proof}[Proof of \autoref{th: converse}]
	Let $N\subset\setR^d$ as in \autoref{def:Regularlagrangianflow}, we have
	\begin{align*}
		\int_s^t h_r(X_r(x))\di r+\int_s^t h_r(X_r(y))\di r  \ge & 
		\abs{\log\left( \frac{|X_t(x)-X_t(y)|}{|X_s(x)-X_s(y)|}\right) }
		=  \abs{\int_s^t \frac{\di}{\di r}|X_r(x)-X_r(y)|\di r}\\
		= & \abs{\int_s^t \frac{\big < b_r(X_r(x))-b_r(X_r(y));\  X_r(x)-X_r(y)\big >}{|X_r(x)-X_r(y)|^2}\di r}
	\end{align*}
	for any $x,y\in \setR^d\setminus N$. Therefore, for any $x,y\in \setR^d\setminus N$, it holds
	\begin{equation*}
		\abs{ \frac{\big < b_r(X_r(x))-b_r(X_r(y));\  X_r(x)-X_r(y)\big >}{|X_r(x)-X_r(y)|^2}}
		\le h_r(X_r(x))+h_r(X_r(y))
		\quad\text{for $\leb^1$-a.e. }r\in [0,T],
	\end{equation*}
	thus 
	\begin{equation}
	\abs{ \frac{\big < b_r(x)-b_r(y);\  x-y\big >}{|x-y|^2}}
	\le  h_r(x)+ h_r(y)
	\quad\text{for $\leb^1$-a.e. }r\in [0,T],\ \forall x,y\in \setR^d\setminus N_r,
	\end{equation}
	where $\leb^d(N_r)=0$.
	Our conclusion follows from \autoref{lemma:Sobolev} below.
\end{proof}

\begin{lemma}\label{lemma:Sobolev}
	Let $b\in L^1(\setR^d;\setR^d)$ and $h\in L^p(\setR^d)$ for $p>1$. If
	\begin{equation*}
		\abs{ \frac{\big < b(x)-b(y);\  x-y\big >}{|x-y|^2}}
		\le h(x)+ h(y)
		\quad \text{for any $x,y\in \setR^d\setminus N$, where $\leb^d(N)=0$},
	\end{equation*}
	then 
	\begin{itemize}
		\item[(i)] when $p=1$, $b\in \text{BD}(\setR^d)$ and $\norm{b}_{\text{BD}}\le \norm{b}_{L^1}+2\norm{h}_{L^1}$;
		\item[(ii)] when $p>1$, $b$ admits a distributional derivative in $L^p$ and $\norm{\nabla b}_{L^p}\lesssim_{d,p} \norm{h}_{L^p}$.
	\end{itemize}
\end{lemma}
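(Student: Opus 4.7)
The strategy is to recognize the hypothesis as a pointwise bound on the symmetric derivative $Eb=\frac12(\nabla b+\nabla b^\top)$, and then to invoke a Korn-type inequality to pass from a bound on $Eb$ to one on $\nabla b$ (for $p>1$), respectively to deduce $b\in \text{BD}$ (for $p=1$). The dichotomy between the two conclusions reflects the $L^p$-boundedness of the Riesz transforms, which holds for $1<p<\infty$ but fails at $p=1$ (Ornstein's non-inequality).

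\textbf{Step 1 (mollification of the inequality).} Let $\rho_\eps$ be a standard mollifier and set $b_\eps:=b*\rho_\eps$, $h_\eps:=h*\rho_\eps$. Writing
\[
b_\eps(x)-b_\eps(y)=\int\bigl(b(x-z)-b(y-z)\bigr)\rho_\eps(z)\di z
\]
and using the identity $x-y=(x-z)-(y-z)$, the assumption (valid for $\leb^{2d}$-a.e.\ $(x,y)$) can be applied inside the integral to produce, for \emph{every} $x,y\in\setR^d$,
\[
\abs{\big<b_\eps(x)-b_\eps(y);\,x-y\big>}\le |x-y|^2\bigl(h_\eps(x)+h_\eps(y)\bigr).
\]
Since $b_\eps$ is smooth, inserting $y=x+t\xi$, dividing by $t^2$ and letting $t\to 0$, I obtain the pointwise bound
\[
\abs{\xi^\top\nabla b_\eps(x)\xi}\le 2h_\eps(x)|\xi|^2\qquad\forall\,x,\xi\in\setR^d.
\]
The left-hand side equals $|\xi^\top Eb_\eps(x)\xi|$, so varying $\xi$ yields $|Eb_\eps(x)|\lesssim_d h_\eps(x)$ and hence $\|Eb_\eps\|_{L^p}\lesssim_d \|h\|_{L^p}$ uniformly in $\eps$.

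\textbf{Step 2 (Korn and passage to the limit).} For $p>1$, I invoke the Korn identity
\[
\Delta b_k=2\partial_i (Eb)_{ik}-\partial_k\tr(Eb),
\]
which expresses $\nabla b_\eps$ as a composition of Riesz transforms applied to $Eb_\eps$. Since Riesz transforms are bounded on $L^p$ for $1<p<\infty$, this gives
\[
\norm{\nabla b_\eps}_{L^p}\lesssim_{d,p}\norm{Eb_\eps}_{L^p}\lesssim_{d,p}\norm{h}_{L^p},
\]
and weak compactness in $L^p$, combined with distributional convergence $b_\eps\to b$, identifies the weak limit as $\nabla b$, proving (ii). For $p=1$ the above Calder\'on--Zygmund argument breaks down, but the uniform $L^1$ bound on $Eb_\eps$ yields weak-$\ast$ compactness of $\{Eb_\eps\leb^d\}$ in the space of finite Radon measures; the limit must coincide with the distributional symmetric derivative $Eb$ of the original field, giving $|Eb|(\setR^d)\le 2\norm{h}_{L^1}$ and hence $b\in\text{BD}(\setR^d)$ with the stated norm bound, proving (i).

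\textbf{Anticipated obstacle.} The delicate point is the Korn step: one must justify the $L^p$ control $\norm{\nabla b_\eps}_{L^p}\lesssim_{d,p}\norm{Eb_\eps}_{L^p}$ on the whole of $\setR^d$ with constants uniform in $\eps$ (and independent of $b$), which requires the full Calder\'on--Zygmund machinery. The gap between $p=1$ and $p>1$ in the statement is inherent and not an artifact of the proof.
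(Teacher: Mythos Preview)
Your proof is correct and follows essentially the same route as the paper: mollify, extract the pointwise bound $|Eb_\eps|\le 2h_\eps$ from the difference-quotient inequality, and then invoke Korn's inequality for $p>1$ (the paper cites \cite[Remark~7.11]{AmbrosioCosciaDalMaso97} rather than writing out the Riesz-transform identity you give) while using weak compactness for $p=1$. The only cosmetic discrepancy is that you write $|Eb_\eps|\lesssim_d h_\eps$ in Step~1 but then claim the sharp constant $2$ in Step~2; since for a symmetric matrix the operator norm equals $\sup_{|\xi|=1}|\xi^\top A\xi|$, the constant is indeed $2$ in that norm, so the stated bound $|Eb|(\setR^d)\le 2\|h\|_{L^1}$ is justified.
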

\begin{proof}
	The proof is based on a simple approximation argument.
	Let $\rho_{\eps}(x):=\eps^{-d}\rho(x/\eps)$ where $\rho\in C^{\infty}_c(\setR^d)$ is nonnegative, supported in $B_1$ with $\int \rho =1$. Set $b_{\eps}:=\rho_{\eps}\ast b$ and $h_{\eps}:=\rho_{\eps}\ast h$, it is easily seen that
	\begin{equation}\label{q1}
		\abs{ \frac{\big < b_{\eps}(x)-b_{\eps}(y);\  x-y\big >}{|x-y|^2}}
		\le h_{\eps}(x)+ h_{\eps}(y),
		\quad\forall x,y\in \setR^d.
	\end{equation}
	Since $b_{\eps}$ and $h_{\eps}$ are smooth functions, using the Taylor expansion of $b^{\eps}$ around $x\in\setR^d$, from \eqref{q1}, we deduce $|\nabla_{\text{sym}} b_{\eps}(x)|\le 2 h_{\eps}(x)$ where we denoted by $\nabla_{\text{sym}}$ the symmetric part of the gradient.
	It gives
	\begin{equation*}
	\sup_{\eps\in (0,1)} \norm{\nabla_{\text{sym}}b_{\eps}}_{L^p}\le 2\norm{h}_{L^p},
	\end{equation*}
	that implies (i). When $p>1$ Korn's inequality (see \cite[Remark 7.11]{AmbrosioCosciaDalMaso97}) implies
	\begin{equation*}
		\sup_{\eps\in (0,1)} \norm{\nabla b_{\eps}}_{L^p}\lesssim_{p,d}
		\sup_{\eps\in (0,1)} \norm{\nabla_{\text{sym}} b_{\eps}}_{L^p}\le
		 2\norm{h}_{L^p}
	\end{equation*}
	and thus (ii) follows.
\end{proof}

\subsection{The key lemma}\label{sec: key lemma}
This section is devoted to the proof of the following.

\begin{proposition}\label{prop: generalFact}
      Let $p\ge 1$ be fixed. For any $f\in L^1(\setR^d)$ satisfying the exponential Lusin-Lipschitz estimate
      \begin{equation}\label{eq: Lusin-Lip exp}
        |f(x)-f(y)|\leq |x-y|\exp\left\lbrace g(x)+g(y) \right\rbrace\quad \forall x,y\in \setR^d,
        \text{ for some } g\in L^p(\setR^d),
     \end{equation}
     it holds
	 \begin{equation}\label{eq: LogSobolev}
	   \int_{B_{1/3}} \int_{\setR^d} \frac{1\wedge|f(x+h)-f(x)|^2}{|h|^d}\frac{1}{\log(1/|h|)^{1-p}} \di x \di h \lesssim_{p,d} \norm{g}_{L^p}^p+ \norm{f}_{L^1}.
	\end{equation}
\end{proposition}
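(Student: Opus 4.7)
My plan is to split the spatial integral according to the magnitude of $g$ relative to the scale set by $|h|$. Fix $\lambda\in(0,1)$ (say $\lambda=1/2$) and, for each $h\in B_{1/3}$, set
\[
A_h:=\{x\in\setR^d:\ g(x)+g(x+h)\le \lambda\log(1/|h|)\}.
\]
On $A_h$ the hypothesis \eqref{eq: Lusin-Lip exp} gives $|f(x+h)-f(x)|\le |h|^{1-\lambda}$, so I can interpolate this with the trivial estimate $|f(x+h)-f(x)|\le |f(x+h)|+|f(x)|$ to obtain
\[
1\wedge|f(x+h)-f(x)|^2\le |h|^{1-\lambda}(|f(x+h)|+|f(x)|) \qquad \forall\, x\in A_h.
\]
Integrating over $x\in\setR^d$ yields $\int_{A_h} 1\wedge|f(x+h)-f(x)|^2\di x\le 2|h|^{1-\lambda}\|f\|_{L^1}$, and the resulting $h$-integral
$2\|f\|_{L^1}\int_{B_{1/3}}|h|^{1-\lambda-d}\log(1/|h|)^{p-1}\di h$
reduces, in polar coordinates, to $\int_0^{1/3}r^{-\lambda}\log(1/r)^{p-1}\di r$, which is finite because $\lambda<1$. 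This bounds the $A_h$ contribution by $C_{p,d}\|f\|_{L^1}$.

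On $A_h^c$ I simply use $1\wedge|f(x+h)-f(x)|^2\le 1$ and estimate $|A_h^c|$ by pigeonhole plus Chebyshev:
\[
|A_h^c|\le 2\bigl|\{g>\tfrac{\lambda}{2}\log(1/|h|)\}\bigr|.
\]
Substituting this into the $h$-integral, passing to polar coordinates and using the change of variable $u=\tfrac{\lambda}{2}\log(1/r)$ (so that $\frac{\di r}{r}=-\frac{2}{\lambda}\di u$ and $\log(1/r)=2u/\lambda$) converts the double integral into
\[
C_{p,d,\lambda}\int_{u_0}^\infty |\{g>u\}|\, u^{p-1}\di u\le \frac{C_{p,d,\lambda}}{p}\|g\|_{L^p}^p,
\]
by the layer-cake identity $\|g\|_{L^p}^p=p\int_0^\infty u^{p-1}|\{g>u\}|\di u$. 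Summing the two contributions gives \eqref{eq: LogSobolev}.

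The main obstacle is that neither estimate by itself survives: the pointwise Lusin--Lipschitz bound on $A_h$ produces only a factor $|h|^{2-2\lambda}$ per point while $A_h$ has essentially full Lebesgue measure, and the Chebyshev bound on $A_h^c$ yields $|A_h^c|\lesssim \log(1/|h|)^{-p}$, too weak to absorb the singular factor $|h|^{-d}$ at fixed $h$. The two fixes---interpolating the Lusin--Lipschitz inequality with the $L^1$ control of $f$ on $A_h$, and performing a \emph{global} layer-cake rearrangement in $h$ on $A_h^c$---are precisely what makes each piece finite and produces the correct linear dependence on $\|f\|_{L^1}$ and $\|g\|_{L^p}^p$.
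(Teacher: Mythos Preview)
Your argument is correct. The two steps---interpolating $|f(x+h)-f(x)|\le |h|^{1-\lambda}$ with $|f(x+h)-f(x)|\le |f(x+h)|+|f(x)|$ on $A_h$, and the layer-cake rewriting of the $A_h^c$ contribution after passing to polar coordinates---both go through exactly as you describe, and summing them yields \eqref{eq: LogSobolev}.

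Your route is genuinely different from the paper's and arguably more elementary. The paper first proves a pointwise-in-$h$ estimate (\autoref{lemma: elementary computation}): using Cavalieri's formula it decomposes $\int 1\wedge|f(x+h)-f(x)|^2\,\di x$ according to the \emph{value} of $|f(x+h)-f(x)|$, obtaining a bound of the form $|h|^2\int_1^{\log(1/|h|)}e^{2\lambda}\leb^d(\{2g>\lambda\})\,\di\lambda+|h|\|f\|_{L^1}$. It then integrates this in $h$, swaps the order of integration, and closes with the asymptotic $e^{2\lambda}\int_\lambda^\infty e^{-2t}t^{p-1}\,\di t\lesssim_p\lambda^{p-1}$. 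In other words, the paper tracks the full distribution of $g$ at every scale, whereas you collapse that information to a single threshold $\tfrac{\lambda}{2}\log(1/|h|)$ and compensate by a direct change of variable. What the paper's approach buys is an explicit intermediate inequality at fixed $h$, which immediately yields the $\sup_h$ estimate with the weak $L^{p,\infty}$ norm (\autoref{remark: sup-norm2}); your decomposition recovers this too (on $A_h^c$ one has $|A_h^c|\lesssim\|g\|_{L^{p,\infty}}^p\log(1/|h|)^{-p}$), though you did not state it. What your approach buys is simplicity: no Fubini over a triangular region, no integration-by-parts asymptotics, and a transparent explanation of why the bound is linear in $\|f\|_{L^1}$ and in $\|g\|_{L^p}^p$.
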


Roughly speaking this result establishes an implication between two different notions of ``having a derivative of logarithmic order''. Note that these notions are not equivalent, indeed every H\"older function $f$ satisfies $\int_{B_{1/3}} \int_{\setR^d} \frac{1\wedge|f(x+h)-f(x)|^2}{|h|^d}\frac{1}{\log(1/|h|)^{1-p}} \di x \di h<\infty$, but there are H\"older functions fulfilling $\liminf_{y\to x}\frac{|f(x)-f(y)|}{|x-y|}=\infty$ for any $x\in \setR^d$, therefore they do not satisfy \eqref{eq: Lusin-Lip exp}.

\begin{remark}\label{remark: moreGeneral fact}
	The result in \autoref{prop: generalFact} is written in a form useful for our purposes and can be generalized in many ways. For instance one can assume that $f\in L^1(\setR^d)$ satisfies a H\"older-Lipschitz inequality
	 \begin{equation*}
    	|f(x)-f(y)|\leq |x-y|^{\alpha}\exp\left\lbrace g(x)+g(y) \right\rbrace\qquad \forall x,y\in \setR^d,
	\end{equation*}
	for some $\alpha\in (0,1]$ and some $g\in L^p(\setR^d)$ and prove
	\begin{equation*}
	\int_{B_{1/3}} \int_{\setR^d} \frac{1\wedge|f(x+h)-f(x)|^2}{|h|^d}\frac{1}{\log(1/|h|)^{1-p}} \di x \di h \lesssim_{p,\alpha,d} \norm{g}_{L^p}^p+ \norm{f}_{L^1}.
	\end{equation*}
    The proof follows verbatim the one in \autoref{prop: generalFact}.
\end{remark}
Let begin by proving a technical lemma.

\begin{lemma}\label{lemma: elementary computation}
   Let $f\in L^1(\setR^d)$ and $g\in L^p(\setR^d)$ be as in \autoref{prop: generalFact}.
   Then it holds
   \begin{equation}
   	\int_{\setR^d} 1\wedge |f(x+h)-f(x)|^2\di x \lesssim_d |h|^2  \int_1^{\log(1/|h|)} e^{2\lambda} \leb^d(\set{2g>\lambda}) \di \lambda+ |h|\norm{f}_{L^1} ,
   \end{equation}
   for every $h\in \setR^d$ with $|h|\leq 1/e$.   
\end{lemma}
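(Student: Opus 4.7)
\textbf{Proof plan for \autoref{lemma: elementary computation}.} The plan is to rewrite the left hand side as an integral of the distribution function of $x \mapsto |f(x+h)-f(x)|$, split the integration range at $t = e|h|$, and estimate the two pieces with two different tools: the trivial $L^1$ bound on the increment in the small-$t$ regime, and the Lusin-Lipschitz estimate \eqref{eq: Lusin-Lip exp} in the large-$t$ regime.

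First, since $1 \wedge s^2 > \sigma$ iff $s^2 > \sigma$ for $\sigma \in (0,1)$, the layer-cake representation followed by the change of variables $\sigma = t^2$ gives
\begin{equation*}
\int_{\setR^d} 1\wedge |f(x+h)-f(x)|^2\di x
= 2\int_0^1 t\, \leb^d\bigl(\{x : |f(x+h)-f(x)|>t\}\bigr)\,\di t.
\end{equation*}
I would then split this integral at $t_0 := e|h|$ (which lies in $(0,1]$ precisely because $|h|\le 1/e$). On $(0,t_0)$, Chebyshev's inequality together with translation invariance of $\leb^d$ yields $\leb^d(\{|f(\cdot+h)-f|>t\}) \le 2\|f\|_{L^1}/t$, so this portion contributes at most $4e|h|\,\|f\|_{L^1}$.

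On the range $[t_0, 1]$, the Lusin-Lipschitz estimate \eqref{eq: Lusin-Lip exp} implies
$$\{|f(x+h)-f(x)|>t\}\subset \{g(x)+g(x+h)>\log(t/|h|)\} \subset \{2g(x)>\log(t/|h|)\}\cup\{2g(x+h)>\log(t/|h|)\},$$
and since translation preserves Lebesgue measure, the right hand side has measure at most $2\,\leb^d(\{2g>\log(t/|h|)\})$. The change of variables $\lambda = \log(t/|h|)$ (so that $t\,\di t = |h|^2 e^{2\lambda}\di\lambda$, and the range $t\in[e|h|,1]$ maps to $\lambda\in[1,\log(1/|h|)]$) converts this piece into exactly
\begin{equation*}
4|h|^2\int_1^{\log(1/|h|)} e^{2\lambda}\,\leb^d(\{2g>\lambda\})\,\di\lambda,
\end{equation*}
as required. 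Adding the two contributions yields the stated bound with a dimension-independent constant.

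The only subtlety, and what motivates the choice of the splitting point $t_0$, is that when $t$ is comparable to or smaller than $|h|$, the exponent $\log(t/|h|)$ becomes non-positive and the superlevel set $\{g(x)+g(x+h)>\log(t/|h|)\}$ may be all of $\setR^d$, which would make the Lusin-Lipschitz approach useless; choosing $t_0 = e|h|$ forces $\lambda \ge 1$ in the transformed integral (matching the lower limit in the statement) while leaving a small-$t$ region short enough that the crude $L^1$ bound gives the desired $|h|\,\|f\|_{L^1}$ term rather than a $\|f\|_{L^1}$ term with no factor of $|h|$.
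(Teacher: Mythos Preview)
Your proposal is correct and follows essentially the same argument as the paper: layer-cake representation, split at $t_0=e|h|$, Chebyshev plus translation invariance on the small-$t$ piece, the Lusin-Lipschitz inclusion $\{g(x)+g(x+h)>\log(t/|h|)\}\subset\{2g(x)>\log(t/|h|)\}\cup\{2g(x+h)>\log(t/|h|)\}$ on the large-$t$ piece, and the change of variables $\lambda=\log(t/|h|)$. Your added remark that the splitting point is forced by the need to keep $\lambda\ge 1$ (so that the superlevel sets carry information) is a useful clarification not spelled out in the paper.
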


\begin{proof}
Using \eqref{eq: Lusin-Lip exp} and Cavalieri's summation formula we get
\begin{align*}
	\int_{\setR^d}  1\wedge  |f(x+h)-&f(x)|^2  \di x\\
     =&2\int_{0}^{e|h|} t \leb^d\left(\set{x:\ |f(x+h)-f(x)|>t}\right)\di t\\
     &+2\int_{e|h|}^1 t\leb^d\left(\set{x:\ |f(x+h)-f(x)|>t }    \right)\di t\\
    \lesssim & |h|\norm{f}_{L^1}+ \int_{e|h|}^1 t \leb^d\left(\set{x:\ |f(x+h)-f(x)|>t }    \right)\di t
     \\
	\lesssim & |h|\norm{f}_{L^1}+\int_{e|h|}^{1}t \leb^d(\set{x:\ g(x)+g(x+h)>\log(t/|h|)}) \di t,
\end{align*}
for $\leb^d$-a.e. $h\in \setR^d$ with $|h|\leq 1/e$.
Estimating $\int_{e|h|}^{1}t \leb^d(\set{x:\ g(x)+g(x+h)>\log(t/|h|)}) \di t$ with
\begin{equation}\label{z1}
2\int_{e|h|}^{1}t \leb^d(\set{2g>\log(t/|h|)}) \di t,
\end{equation}
setting $\lambda=\log(t/|h|)$ and changing variables in \eqref{z1} we conclude the proof.
\end{proof}
	\begin{remark}\label{remark: sup-norm2}
	Observe that \autoref{lemma: elementary computation} immediately gives a weak version of \autoref{prop: generalFact}:
	\begin{equation*}
	\sup_{h\in B_{1/3}}  \log(1/|h|)^p	\int_{\setR^d} 1\wedge |f(x+h)-f(x)|^2\di x\lesssim_{p,d}  \norm{g}_{L^{p,\infty}}^p+ \norm{f}_{L^1},
	\end{equation*}
	where $L^{p,\infty}$ is the weak $L^p$ space. 
	Indeed we have
	\begin{align*}
	|h|^2  \int_1^{\log(1/|h|)} e^{2\lambda} \leb^d(\set{2g>\lambda}) \di \lambda\lesssim |h|^2 \norm{g}_{L^{p,\infty}}^p \int_1^{\log(1/|h|)} e^{2\lambda} \lambda^p \di \lambda \lesssim_p \log(1/|h|)^p  \norm{g}_{L^{p,\infty}}^p.
	\end{align*}
	\end{remark}

\begin{proof}[Proof of \autoref{prop: generalFact}]
	In order to shorten notation we set $\mu(\lambda):=	\leb^d( \set{2g>\lambda}) \di \lambda$. Using the result in \autoref{lemma: elementary computation} we get
\begin{align*}
		\int_{B_{1/e}} \int_{\setR^d} &\frac{1\wedge|f(x+h)-f(x)|^2}{|h|^d\log(1/|h|)^{1-p}}  \di x \di h\\
		 &\lesssim  \int_{B_{1/e}} \frac{\log(1/|h|)^{p-1}}{|h|^{d}}\left( |h|^2\int_1^{\log(1/|h|)} e^{2\lambda}\di \mu(\lambda)+ |h|\norm{f}_{L^1} \right) \di h\\
		 &\lesssim_{p,d} \int_0^1 \log(1/r)^{p-1}r\int_1^{\log(1/r)} e^{2\lambda}\di \mu(\lambda) \di r+\norm{f}_{L^1}.
\end{align*}
Changing variables according to $\log(1/r)=t$ and applying Fubini theorem we get
\begin{align*}
\int_0^1 \log(1/r)^{p-1}r\int_1^{\log(1/r)}& e^{2\lambda}\di \mu(\lambda) \di r\\
= & \int_0^{\infty} e^{-2t} t^{p-1} \int_1^t e^{2\lambda} \di \mu(\lambda) \di t\\
= & \int_1^{\infty} e^{2\lambda} \int _{\lambda}^{\infty} e^{-2t}t^{p-1} \di t\di \mu(\lambda).	
\end{align*}	
Using the integration by part formula and the inequality $\lambda\ge 1$ it is elementary to check that 
\begin{equation}
	e^{2\lambda} \int _{\lambda}^{\infty} e^{-2t}t^{p-1} \di t
	\lesssim_p \lambda^{p-1},
\end{equation}	
that together with the definition of $\mu(\lambda)$ implies
\begin{equation*}
   \int_1^{\infty} e^{2\lambda} \int _{\lambda}^{\infty} e^{-2t}t^{p-1} \di t\di \mu(\lambda)\lesssim_p \int_0^{\infty} \lambda^{p-1} \di \mu(\lambda)
   \lesssim_p \norm{g}_{L^p}^p.
\end{equation*}	
Putting all together we get
\begin{equation}
\int_{B_{1/e}} \int_{\setR^d} \frac{1\wedge|f(x+h)-f(x)|^2}{|h|^d}\frac{1}{\log(1/|h|)^{1-p}} \di x \di h \lesssim_{p,d} \norm{g}_{L^p}^p+ \norm{f}_{L^1},
\end{equation}
that clearly implies our conclusion
\end{proof}
	
Eventually the proof of \autoref{thm:regularity result} follows applying \autoref{prop: generalFact} with $f=u_t$, and recalling \autoref{cor: LusinLipschitz continuity} and \autoref{remark: norm is conserved} below.

\begin{remark}\label{remark: norm is conserved} 
	If $b\in L^1([0,T];BV(\setR^d;\setR^d))$ with bounded divergence then any solution $u_t$ of \eqref{CE} in $L^{\infty}$ has \textit{the renormalization property} (see \cite{Ambrosio04}) that is to say, for any $\beta\in C_c^1(\mathbb{R})$ the function $\beta(u(t,x))$ is a solution of the continuity equation as well. It implies that, for every $1\le p<\infty$, the $L^p$ norm of $u_t$ is preserved in time.
	
	Note that in the smooth setting, at least for $p=2$,
	 this property comes from a simple computation:
	\begin{align*}
	\frac{\di}{\di t}\frac{1}{2}\int_{\mathbb{R}^d}|u_t|^2\di x
	=-\int_{\mathbb{R}^d}u_t\div(b_t u_t) \di x
	=-\frac{1}{2} \int_{\setR^d} \div(b_tu_t^2)\di x=0.
	\end{align*}
	
\end{remark}

\section{Counterexamples and mixing estimates}\label{sec:counterexamples}
The aim of this section is to show the sharpness of \autoref{thm:regularity result} in the scale of log-Sobolev functionals. 

The first result ensures that the polynomial growth of order $p$ proved in \eqref{eq: LogSobolevContinuity} is sharp. In the case $p=1$ the result is already known and has been established by L\'eger in \cite{LegerFlavien16}.

\begin{theorem}\label{thm:sharpness2}
	Let $p\geq 1$. There exist a smooth divergence-free vector field $b$ belonging to $L^{\infty}([0,+\infty); W^{1,\infty}(\setR^d,\setR^d))$ supported in $B_1\times
	[0,+\infty)$, and a smooth initial data $\bar u$ supported in $B_1$, such that the unique solution $u\in L^{\infty}([0,+\infty)\times\mathbb{R}^d)$ of the continuity equation \eqref{CE} satisfies
	\begin{equation*}
	\int_{B_{1/3}}\int_{\mathbb{R}^d}\frac{|u_t(x+h)-u_t(x)|^2}{|h|^d}\frac{1}{\log(1/|h|)^{1-p}} \di x\di h\gtrsim  t^p,
	\end{equation*}
	for any $t\in [0,+\infty)$.
\end{theorem}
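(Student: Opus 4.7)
The plan is to exhibit a single pair $(b,\bar u)$ for which the solution $u_t$ carries a uniformly positive fraction of its $L^2$-mass at frequencies $|\xi|\gtrsim e^{ct}$ for some $c>0$. Combined with the Fourier-side equivalence \eqref{interpolation-ine}, this immediately gives
\[
\int_{B_{1/3}}\int_{\setR^d}\frac{|u_t(x+h)-u_t(x)|^2}{|h|^d}\frac{1}{\log(1/|h|)^{1-p}}\di x\di h\gtrsim t^p.
\]
The natural way to produce exponential stretching of frequencies while keeping $b$ bounded in $W^{1,\infty}$ is via a localized hyperbolic saddle, which will constitute the building block of the construction.

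For the base block, let $\chi\in C_c^\infty(B_1)$ be equal to $1$ on $B_{1/2}$, set $\psi(x):=\chi(x)x_1x_2$ and define $b:=\nabla^\perp\psi$ in the first two coordinates (extended by zero in the remaining ones when $d>2$). Then $b$ is smooth, divergence-free, supported in $B_1$, and uniformly bounded in $W^{1,\infty}$; on $B_{1/2}$ it equals the linear saddle $(x_1,-x_2)$ whose flow is $X_t(x_1,x_2)=(e^tx_1,e^{-t}x_2)$. Taking $\bar u(x)=\cos(x_2)\eta(x)$ with $\eta\in C^\infty_c$ supported in the thin vertical tube $\{|x_1|\le r_0\}\cap B_{1/2}$ for some small $r_0$, the push-forward identity $u_t=\bar u\circ X_t^{-1}$ gives an explicit oscillation at frequency $e^t$ in the $x_2$-direction, as long as the stretched support $X_t(\mathrm{supp}\,\bar u)$ is still contained in $B_{1/2}$, i.e.\ for $t\in[1,T]$ with $T:=\log(1/(2r_0))$. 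A direct Plancherel computation then yields $\int\log(|\xi|)^p|\hat u_t(\xi)|^2\di\xi\gtrsim t^p\|\bar u\|_{L^2}^2$ on this interval, and the desired lower bound for such $t$ follows from \eqref{interpolation-ine}, since the $L^2$-norm of $u_t$ is conserved along the incompressible flow.

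To extend the estimate to every $t>0$ with a single pair $(b,\bar u)$, I would iterate the block via a time-dependent renewal construction: on successive intervals $[T_n,T_{n+1}]$ the velocity is a translated and rescaled copy of the saddle above, localized on an even thinner sub-tube fitting the stretched support of $u_{T_n}$, so that each block contributes another exponential factor of stretching in the relevant direction. The main technical obstacle will be to show that through the renewal $\|b\|_{L^\infty_tW^{1,\infty}_x}$ stays uniformly bounded and a definite fraction of the $L^2$-mass of $u_t$ remains at frequencies $\gtrsim e^{cT_n}$; this reduces to a careful geometric bookkeeping of supports and scales, after which the low-frequency contribution in \eqref{interpolation-ine} adds only a harmless constant and the bound $\gtrsim t^p$ holds for every $t>0$.
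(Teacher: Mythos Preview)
Your approach differs substantially from the paper's, and the step you flag as ``the main technical obstacle'' is a genuine gap, not merely bookkeeping.

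The paper does not build its own example. It takes \autoref{alberti,Crippa,Mazzucato} (the Alberti--Crippa--Mazzucato construction) as a black box: a smooth divergence-free field $v\in L^\infty_t W^{1,\infty}_x$ and a solution $\rho$ with $\norm{\rho_t}_{\dot H^{-1}}\lesssim e^{-ct}$ for \emph{all} $t\ge 0$. It then applies the new interpolation inequality \autoref{cor: interpolation} with $\gamma=1-p$; together with conservation of $\norm{\rho_t}_{L^2}$ (\autoref{remark: norm is conserved}) this yields
\[
\int_{B_{1/5}}\int_{\setR^d}\frac{|\rho_t(x+h)-\rho_t(x)|^2}{|h|^d\log(1/|h|)^{1-p}}\di x\di h
\;\gtrsim\;
\norm{\rho_0}_{L^2}\,\log\Bigl(2+\tfrac{\norm{\rho_0}_{L^2}}{\norm{\rho_t}_{\dot H^{-1}}}\Bigr)^{p}
\;\gtrsim\; t^{p},
\]
and that is the entire proof.

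Your renewal scheme is, in effect, an attempt to reproduce the Alberti--Crippa--Mazzucato result from scratch, and the difficulty there is structural. After one saddle block the support of $u_T$ is a strip of width $\sim e^{-T}$ in $x_2$ and $\sim 1$ in $x_1$. Localizing a new saddle to that strip forces the cutoff to vary at scale $e^{-T}$, so $\norm{\nabla b}_{L^\infty}$ picks up a factor $e^{T}$ and blows up; keeping the same global saddle instead lets the support escape $B_{1/2}$ in $x_1$. Sustaining exponential frequency growth for all $t$ while keeping $b$ uniformly in $W^{1,\infty}$ requires a different mechanism---in \cite{AlbertiCrippaMazzuccato18} this is done with self-similar folding (quasi-horseshoe) maps, not iterated saddles, precisely to avoid this scale mismatch. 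Without such a mechanism your argument is valid only on the bounded interval $[0,T]$. The cleanest fix is to invoke \autoref{alberti,Crippa,Mazzucato} directly and then use either \eqref{interpolation-ine} (as you propose) or \autoref{cor: interpolation} (as the paper does) to pass from the $\dot H^{-1}$ decay to the log-Sobolev lower bound.
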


The second and most important example has been already presented in the introduction (part two of \autoref{thm: intro}) and is restated below for the reader convenience.

\begin{theorem}\label{thm: sharpness}
Let $p\geq 1$. There exist a divergence-free vector field $b\in L^\infty((0,+\infty); W^{1,p}(\mathbb{R}^d))$ supported in $B_1\times
[0,+\infty)$, and an initial data $\bar u\in L^\infty(\setR^d)\cap W^{1,d}(\mathbb{R}^d)$ also supported in $B_1$, such that the unique solution $u\in L^\infty([0,+\infty)\times\setR^d)$ of the continuity equation \eqref{CE} satisfies
	\begin{equation}\label{eq:counterexample}
	\int_{B_{1/3}}\int_{\mathbb{R}^d}\frac{|u_t(x+h)-u_t(x)|^2}{|h|^d}\frac{1}{\log(1/|h|)^{\gamma}} \di x\di h=\infty,
	\qquad \forall\gamma<1-p,
	\end{equation}
for every $t>0$.
\end{theorem}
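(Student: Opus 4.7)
\textbf{Plan for Theorem \ref{thm: sharpness}.} The strategy is to superimpose countably many independent Alberti--Crippa--Mazzucato-type shear cascades, each localised in a small cube of $B_1$ and all running simultaneously from $t = 0$, but at mixing rates tuned so that the combined $W^{1,p}$ norm of $b$ stays finite. At any time $t > 0$ each cube is refined to a scale depending on its own mixing rate, and summing the contributions to the log-Sobolev functional of order $-\gamma > p - 1$ will yield infinity. The quantitative conversion between ``mixing'' and ``log-Sobolev blow-up'' will be furnished by the interpolation inequality of \autoref{interpolation} (and \autoref{cor: interpolation}).

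First I would fix a countable disjoint family $\{Q_n\}_{n \geq 1} \subset B_{1/2}$ of open cubes of volumes $a_n := |Q_n|$ with $\sum a_n < \infty$ (for instance $a_n = 2^{-n}$). In each $Q_n$ I would prescribe a divergence-free ACM-type shear vector field $b^{(n)}$ supported in $\overline{Q_n} \times [0,\infty)$, acting at a mixing rate $r_n > 0$, together with a smooth two-valued initial datum $\bar u^{(n)}$ of amplitude $\varepsilon_n$ at scale $a_n^{1/d}$ inside $Q_n$. Since the $Q_n$ are disjoint, the superpositions $b := \sum_n b^{(n)}$ and $\bar u := \sum_n \bar u^{(n)}$ are well-defined, $b$ is divergence-free, the continuity equation decouples into one cascade per cube, and $u_t = \sum_n u_t^{(n)}$; each $u_t^{(n)}$ is then a stripe pattern of amplitude $\varepsilon_n$ and scale $\delta_n(t) \simeq a_n^{1/d} e^{-c r_n t}$ inside $Q_n$.

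The norms decouple across cubes: $\|\nabla b_t\|_{L^p}^p \simeq \sum_n r_n^p a_n$, $\|b_t\|_{L^\infty} \simeq \sup_n r_n a_n^{1/d}$, $\|\bar u\|_{L^\infty} \leq \sup_n \varepsilon_n$, and $\|\nabla \bar u\|_{L^d}^d \simeq \sum_n \varepsilon_n^d$. Choosing $\varepsilon_n = 1/n$ and $r_n = c_0/(n (\log n)^2 a_n^{1/p})$ makes $\|\bar u\|_{L^\infty \cap W^{1,d}}$ and $\sup_t \|b_t\|_{W^{1,p} \cap L^\infty}$ all finite for every $p \geq 1$. For the sharpness conclusion, fix $t > 0$ and $\gamma < 1 - p$; localising the interpolation inequality against a bump adapted to $Q_n$ and using $\|u_t^{(n)}\|_{L^2}^2 \simeq \varepsilon_n^2 a_n$ together with $\|u_t^{(n)}\|_{\dot H^{-1}} \simeq \varepsilon_n a_n^{1/2} \delta_n(t)$ gives a lower bound on the $Q_n$-contribution to the integral in \eqref{eq:counterexample} of the order $\varepsilon_n^2 a_n (\log(1/\delta_n(t)))^{1-\gamma} \simeq \varepsilon_n^2 a_n (r_n t)^{1-\gamma}$. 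Substituting the choices above, the total sum becomes a geometric series in $n$ with exponent $(1-\gamma)/p - 1 > 0$ modulated by polynomial-logarithmic factors, hence divergent, and this proves the claim.

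The main obstacle will be handling the cross terms in the double integral when $x$ and $x + h$ lie in different cubes: a careful localisation argument---restricting $h$ to scales well below the minimum inter-cube separation and cutting off each summand with a bump function supported slightly inside $Q_n$---should reduce the analysis to a sum of in-cube integrals, at the price of harmless constants. A secondary delicate point is obtaining the in-cube lower bound on the log-Sobolev contribution for the explicit stripe pattern; this will follow from the interpolation inequality combined with the explicit $\dot H^{-1}$ estimate above, but the parameters $(\varepsilon_n, r_n, a_n)$ must be chosen jointly so that $W^{1,d}$, $L^\infty$, and $W^{1,p}$ budgets are all respected while the final sum still diverges for every $\gamma < 1 - p$.
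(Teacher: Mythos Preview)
Your proposal is correct and follows essentially the same strategy as the paper's own proof: rescaled Alberti--Crippa--Mazzucato blocks (\autoref{alberti,Crippa,Mazzucato}) placed in disjoint sub-cubes of $B_1$, the scaled interpolation inequality (\autoref{interpolation}, in the form of \autoref{cor: log estimate}) converting the exponential $\dot H^{-1}$ decay of each block into a lower bound on its log-Sobolev contribution, and a localisation lemma (\autoref{subesti} in the paper) to decouple the cubes up to a summable error. Your parameter choices $(a_n,\varepsilon_n,r_n)=(2^{-n},\,n^{-1},\,c_0\,2^{n/p}/(n(\log n)^2))$ differ from the paper's $(\lambda_n,\gamma_n,\tau_n)=(e^{-n},\,n^{-2},\,(n^2e^{-dn})^{1/p})$, but both yield the same divergence criterion $\gamma<1-p$.
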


As a consequence of \autoref{prop: generalFact} (see also \autoref{remark: moreGeneral fact}) and \autoref{prop: regularity RLF} the result in \autoref{thm: sharpness} immediately implies the following.
\begin{proposition}
	Let $p\geq 1$ fixed. For every $q>p$ there exists a compact supported divergence-free vector field $b\in L^\infty([0,+\infty); W^{1,p}(\setR^d))$ whose regular Lagrangian flow $X$ satisfies the following property:
	for every $t>0$, $g\in L^q(\setR^d)$ and $\alpha\in (0,1]$ there exists a set $E\subset\setR^d$ of positive $\leb^d$ measure such that
	\begin{equation}
	|X_t(x)-X_t(y)|> |x-y|^{\alpha}\exp\left\lbrace g(x)+g(y)\right\rbrace\qquad \forall x,y\in E.
	\end{equation}
\end{proposition}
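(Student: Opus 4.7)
The plan is to derive the proposition from Theorem \ref{thm: sharpness} combined with the contrapositive of Proposition \ref{prop: generalFact} (in the H\"older-generalized form of Remark \ref{remark: moreGeneral fact}) and the pointwise Lusin--Lipschitz bound for $W^{1,d}$ functions.

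Fix $p \geq 1$ and $q > p$; one may assume $q < d$ (the remaining case is handled by upgrading the Sobolev regularity of the initial datum in the construction of Theorem \ref{thm: sharpness}). Take the divergence-free, compactly supported $b \in L^\infty([0,+\infty); W^{1,p}(\setR^d))$ and the initial datum $\bar u \in W^{1,d}(\setR^d) \cap L^\infty(\setR^d)$ furnished by Theorem \ref{thm: sharpness} for the exponent $p$; let $X_t$ denote its regular Lagrangian flow, $Y_t$ the a.e. inverse (well defined by incompressibility), and $u_t = \bar u \circ Y_t$ the unique bounded solution to \eqref{CE}. Theorem \ref{thm: sharpness} guarantees that the log-Sobolev functional of $u_t$ of order $q$ is infinite at every $t > 0$, and hence by the contrapositive of Proposition \ref{prop: generalFact}, for every $t > 0$, $g \in L^q(\setR^d)$ and $\alpha \in (0, 1]$ the set
\[
A := \{(z, w) \in \setR^d \times \setR^d : |u_t(z) - u_t(w)| > |z - w|^\alpha e^{g(z) + g(w)}\}
\]
has positive $\leb^{2d}$-measure.

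Plugging the pointwise bound $|\bar u(a) - \bar u(b)| \lesssim |a-b|(M|\nabla \bar u|(a) + M|\nabla \bar u|(b))$ (valid for $\bar u \in W^{1,d}$) into the identity $|u_t(z) - u_t(w)| = |\bar u(Y_t(z)) - \bar u(Y_t(w))|$ converts the strict inequality defining $A$ into a pointwise lower bound $|Y_t(z) - Y_t(w)| > |z - w|^\alpha e^{\tilde g(z) + \tilde g(w)}$ on $A$, where $\tilde g := g - \log(1 + M|\nabla \bar u| \circ Y_t)$ lies in $L^q(\setR^d)$ thanks to $q < d$, the measure-preservation of $Y_t$ and the sub-polynomial growth of $\log$. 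Since the time-reversed vector field $b'(s, x) := -b(T-s, x)$ is still a compactly supported divergence-free $W^{1,p}$ field, with regular Lagrangian flow $X'$ satisfying $X'_T = Y_T$, this yields the required set for $X'_T = Y_T$. The self-similar/multi-scale structure of the ACM-type construction underpinning Theorem \ref{thm: sharpness} permits arranging a single $b$ whose flow enjoys the sharpness property at every positive time simultaneously.

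The main remaining step, and the principal technical obstacle, is a measure-theoretic upgrade: from the positive $\leb^{2d}$-measure symmetric set $A$ disjoint from the diagonal, one must extract a product set $E \times E$ with $E \subset \setR^d$ of positive $\leb^d$-measure on which the strict lower bound holds at every pair. This is accomplished by a Lusin-type density argument exploiting the symmetry of $A$ in $(z, w)$ and the freedom to modify $Y_t$ and $\tilde g$ on Lebesgue null sets.
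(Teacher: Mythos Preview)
Your route---contrapositive of Proposition~\ref{prop: generalFact} (in the H\"older form of Remark~\ref{remark: moreGeneral fact}) applied to the solution $u_t$ of Theorem~\ref{thm: sharpness}, then transfer to the flow via the pointwise Lusin bound for $\bar u\in W^{1,d}$---is precisely the argument the paper has in mind; the paper, too, records this only as a one-line implication and gives no further details.

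The step you yourself flag as ``the main remaining step'' is, however, a genuine gap. Extracting from an arbitrary symmetric set $A\subset\setR^d\times\setR^d$ of positive $\leb^{2d}$-measure a set $E$ of positive $\leb^d$-measure with $E\times E\subset A$ is impossible in general: for instance $A=\{(x,y)\in[0,1]^2:x+y\notin\setQ\}$ is symmetric and has full measure, yet by Steinhaus' theorem any $E$ with $\leb^1(E)>0$ has $E+E$ containing an interval and hence a rational, so some off-diagonal pair of $E\times E$ lies outside $A$. Neither the symmetry of $A$ nor the freedom to modify $Y_t$ and $\tilde g$ on null sets circumvents this obstruction, and you have not identified any concrete structure of your particular $A=\{(z,w):|Y_t(z)-Y_t(w)|>|z-w|^\alpha e^{\tilde g(z)+\tilde g(w)}\}$ that would; the phrase ``Lusin-type density argument'' is not an argument. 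A secondary gap is the passage from the inverse flow $Y_t$ to the forward flow of a \emph{single} field valid at every $t>0$: the time-reversed field $b'(s,x)=-b(T-s,x)$ depends on the terminal time $T$, and your appeal to the ``self-similar/multi-scale structure'' of the construction is a hand-wave, not a proof. As it stands, what your argument actually establishes is only the weaker conclusion that no $g\in L^q$ can satisfy the Lusin--H\"older bound $|Y_t(z)-Y_t(w)|\le|z-w|^\alpha e^{\tilde g(z)+\tilde g(w)}$ for $\leb^{2d}$-a.e.\ pair $(z,w)$.
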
 
In other words the exponential Lusin-Lipschitz regularity of order $p$ for Lagrangian flows associated to vector fields belonging to $W^{1,p}$ cannot be improved. Even an exponential Lusin H\"older regularity of order greater than $p$ cannot be reached. Compare this with \autoref{th: converse}.

The main idea behind our constructions comes from the work \cite{AlbertiCrippaMazzuccato16} by Alberti, Crippa and Mazzucato. In this paper the authors built a solution to \eqref{CE}, drifted by a divergence-free Sobolev vector field, that is smooth at time zero but it does not belong to any Sobolev space for positive times.
The construction of a vector field $b$ and the solution $u_t$ is achieved by patching together a countable number of pairs $v_n$ and $\rho_n$ of velocity fields and solutions to the Cauchy problem \eqref{CE} with disjoint supports. They are obtained by rescaling in space, time and size suitable building blocks provided by the following:

\begin{proposition}\label{alberti,Crippa,Mazzucato} Assume $d\geq 2$ and let $Q$ be the open cube with unit side centered at the origin of $\mathbb{R}^d$. There exist a velocity field $v\in C^\infty([0,+\infty)\times\mathbb{R}^d)$ and a (non trivial) solution $\rho\in  L^\infty([0,+\infty)\times\mathbb{R}^d) $ of the continuity equation \eqref{CE} such that 
	\begin{itemize}
		\item[(i)]$v_t$ is bounded, divergence-free and compactly supported in $Q$ for any $t\ge 0$;
		\item[(ii)] $\rho_t$ has zero average and it is bounded and compactly supported in $Q$ for any $t\ge 0$;
		\item[(iii)] $\sup_{t\ge 0}\norm{v_t}_{W^{1,p}(\setR^d)}<\infty$ for any $t\ge 0$, for any $1\le p\le \infty$;
		\item[(iv)] there exists a constant $c>0$ such that
		 \begin{equation}\label{z10}
		      \norm{\rho_t}_{\dot H^{-1}(\mathbb{R}^d)}\lesssim \exp(-ct),\qquad \forall t\ge 0,
		 \end{equation}
		 where $\norm{\cdot}_{\dot H^{-1}}$ is the negative homogeneous Sobolev norm of order $-1$.
	\end{itemize}	
\end{proposition}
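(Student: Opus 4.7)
The plan is to build $(v,\rho)$ via a self-similar cascade that halves the spatial oscillation scale of $\rho$ in each unit of time, following the Alberti--Crippa--Mazzucato strategy.

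\textbf{Base mixer.} First I would construct a smooth template $\rho^{\ast}\in C^{\infty}_c(Q)$ with $\int\rho^{\ast}=0$ and $\norm{\rho^{\ast}}_{L^{\infty}}=1$, together with a smooth divergence-free $w\in C^{\infty}_c((0,1)\times Q;\setR^d)$, such that the (smooth) solution $\tilde\rho$ of the continuity equation with datum $\rho^{\ast}$ and drift $w$ satisfies
\[
\tilde\rho_1(x)=\rho^{\ast}\bigl(2(x-z_k)\bigr)\quad\text{for }x\in Q_k,\ k=1,\dots,2^d,
\]
where $Q_1,\dots,Q_{2^d}$ are the $2^d$ dyadic sub-cubes of side $1/2$ with centres $z_k$ tiling $Q$. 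A convenient realisation uses an orthogonal pair of smooth cellular shears composed with a reflection, implementing \emph{stretch and fold} dynamics; to make the patching in the cascade seamless, I would require $\rho^{\ast}$ and $w_t$ to vanish in a fixed collar neighbourhood of $\partial Q$.

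\textbf{Cascade.} On each slab $[n,n+1]\times\setR^d$ with $n\in\setN$, I would then glue $2^{dn}$ rescaled copies of $(w,\tilde\rho)$: on the sub-cube $Q_{n,k}$ of side $2^{-n}$ and centre $z_{n,k}$ tiling $Q$, set
\[
v_t(x):=2^{-n}\,w_{t-n}\bigl(2^n(x-z_{n,k})\bigr),\qquad
\rho_t(x):=\tilde\rho_{t-n}\bigl(2^n(x-z_{n,k})\bigr),
\]
and extend both by zero outside $Q$. Since $w$ vanishes near $t\in\{0,1\}$ and near $\partial Q$, $v$ is globally $C^{\infty}$ and compactly supported in $[0,\infty)\times Q$, and the consistency at integer times follows from the Step-1 property of $\tilde\rho_1$. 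The crucial scaling identity $\nabla v_t(x)=(\nabla w_{t-n})(2^n(x-z_{n,k}))$ is $n$-independent in amplitude, so summing over the $2^{dn}$ disjoint sub-cubes yields $\norm{\nabla v_t}_{L^p(\setR^d)}\le \norm{\nabla w_{t-n}}_{L^p(Q)}$ uniformly in $(n,t)$, giving (iii); (i) and (ii) are immediate from the construction.

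\textbf{Exponential decay.} For (iv), at any $t\in[n,n+1]$ the density $\rho_t$ is bounded, supported in $Q$, and has zero mean on every sub-cube of side $2^{-n}$. Given $\phi\in C^{\infty}_c(\setR^d)$, subtracting from $\phi$ its mean on each such sub-cube and applying Poincar\'e's inequality gives
\[
\Bigl|\int_{\setR^d}\rho_t\phi\di x\Bigr|\lesssim 2^{-n}\norm{\rho_t}_{L^2}\norm{\nabla\phi}_{L^2}\lesssim 2^{-n}\norm{\nabla\phi}_{L^2},
\]
whence $\norm{\rho_t}_{\dot H^{-1}}\lesssim 2^{-n}\lesssim e^{-ct}$ with $c=\log 2$.

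\textbf{Main obstacle.} The only genuinely delicate step is the base mixer: engineering a smooth, compactly supported, divergence-free $w$ whose time-one flow sends $\rho^{\ast}$ to exactly $2^d$ half-scale copies of itself. The collar-support condition turns the global regularity question into a purely local construction on a single cube, which can be realised by composing smooth cellular shears in the coordinate directions with a period-doubling reflection; the remainder of the argument is essentially the scale-invariance of the $W^{1,\infty}$ seminorm under the rescaling $x\mapsto 2^{-n}x$, $v\mapsto 2^{-n}v$.
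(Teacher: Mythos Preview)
The paper does not prove this proposition at all; it is quoted from \cite[Theorem~8]{AlbertiCrippaMazzuccato18} and used as a black box (see the sentence immediately following the statement). Your sketch is precisely the self-similar cascade of Alberti--Crippa--Mazzucato that underlies the cited result, and the outline is sound: the scaling computation for (iii) is correct (the $2^{dn}$ disjoint sub-cubes exactly compensate the Jacobian $2^{-dn}$, so $\norm{\nabla v_t}_{L^p}=\norm{\nabla w_{t-n}}_{L^p(Q)}$), and the Poincar\'e argument for (iv) is correct because the rescaled drift is supported in each sub-cube $Q_{n,k}$, hence preserves the zero mean of $\rho_t$ on $Q_{n,k}$ throughout $[n,n+1]$. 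The only step you leave genuinely open is the base mixer, which in \cite{AlbertiCrippaMazzuccato16,AlbertiCrippaMazzuccato18} is realised in $d=2$ via a smooth version of the Baker's transformation (two cellular shears composed with a fold) and then extended trivially in the extra variables; your identification of this as the main obstacle, and of the collar-support trick as what makes the global patching work, is accurate.
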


The result in \autoref{alberti,Crippa,Mazzucato}, that is taken from \cite[Theorem 8]{AlbertiCrippaMazzuccato18}, provides a solution of \eqref{CE} (associated to a smooth divergence-free vector field) whose $\dot H^{-1}$ norm decays exponentially fast in time. 
It can be shown that, under the uniform $W^{1,p}$ bound with $p>1$ on the vector field, the rate of decay of negative Sobolev norms for solutions of \eqref{CE} is at most exponential (see \autoref{prop: mixing estimate} and the discussion above for more details), thus $\rho_t$ saturates this rate.

Observe that, by means of \eqref{z10}, \autoref{remark: norm is conserved} and the interpolation inequality
\begin{equation*}\label{z9}
\norm{\rho_t}_{L^2}^2\leq \norm{\rho_t}_{\dot H^{-1}}\norm{\rho_t}_{\dot H^1},
\end{equation*}
we deduce 
\begin{equation*}
	\norm{\rho_t}_{\dot H^1} \gtrsim \norm{\rho_0}_{L^1}\exp(ct), \qquad \forall t\ge 0,
\end{equation*}
namely, $\rho_t$ loses the Sobolev regularity exponentially fast in time.
This heuristic suggests to introduce a new interpolation inequality involving log-Sobolev functionals, negative Sobolev norms and $L^p$ norms in order to prove \autoref{thm:sharpness2}. It is the content of the next subsection.

%

\subsection{Interpolation inequality and proof of \autoref{thm:sharpness2}}\label{subsectio: Interpolation}
The main result of this subsection is inspired by \cite[Proof of Theorem 2.4]{Daodiaznguyen} and reads as follows.

\begin{proposition}\label{interpolation}
	Let us fix parameters $\gamma\in (-\infty,1)$, $\lambda \in (0,1/100)$ and $\delta\in (0,1]$. For any $f\in L^2(\setR^d)$ it holds
\begin{equation}\label{eq: interpolation}
    \norm{f}_{L^2}^2
    \lesssim_{d,\gamma}   \frac{1}{|\log(\delta\lambda)|^{1-\gamma}}\int_{B_{\frac{1}{5\delta}}}\int_{\mathbb{R}^d}\frac{|f(x+h)-f(x)|^2}{|h|^d|\log(\delta |h|)|^\gamma}\di x\di h+|\log(\lambda)|\frac{\norm{f}_{L^2}^2}{\log\left(2+\frac{\norm{f}_{L^2}}{\norm{f}_{\dot H^{-1}}}\right)}.
\end{equation}
\end{proposition}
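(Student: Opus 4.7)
The plan is to pass to Fourier space via Plancherel and perform a high/low frequency split at a carefully chosen threshold. First, combining the Fourier equivalence \eqref{interpolation-ine} with a simple rescaling $y=\delta x$, $\hat g(\xi)=\delta^d\hat f(\delta\xi)$ applied to $g(y):=f(y/\delta)$, one obtains the $\delta$-dependent Fourier bound
\begin{equation*}
\int_{B_{1/(5\delta)}}\int_{\mathbb{R}^d}\frac{|f(x+h)-f(x)|^2}{|h|^d|\log(\delta|h|)|^\gamma}\di x\di h \;\gtrsim_{d,\gamma}\; \int_{|\xi|\ge C\delta}|\log(|\xi|/\delta)|^{1-\gamma}|\hat f(\xi)|^2\di\xi
\end{equation*}
for an absolute constant $C$. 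This reduces the task to showing the Fourier inequality itself.

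I would then set $R:=1/\lambda$ (note $R\ge 100$ and $R\gg C\delta$ since $\lambda\le 1/100$ and $\delta\le 1$) and split
\begin{equation*}
\|f\|_{L^2}^2=\int_{|\xi|\le R}|\hat f(\xi)|^2\di\xi+\int_{|\xi|>R}|\hat f(\xi)|^2\di\xi.
\end{equation*}
On the high-frequency piece, since $|\log(|\xi|/\delta)|\ge \log(R/\delta)=|\log(\delta\lambda)|$ for $|\xi|>R$, the previous bound gives exactly the first term on the right-hand side of \eqref{eq: interpolation}. On the low-frequency piece, Plancherel and the definition of $\dot H^{-1}$ yield
\begin{equation*}
\int_{|\xi|\le R}|\hat f|^2\di\xi\;\le\; R^2\|f\|_{\dot H^{-1}}^2\;=\;\frac{\|f\|_{L^2}^2}{(\lambda R_*)^2},\qquad R_*:=\frac{\|f\|_{L^2}}{\|f\|_{\dot H^{-1}}}.
\end{equation*}

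The remaining step is to match this low-frequency bound against $|\log\lambda|\,\|f\|_{L^2}^2/\log(2+R_*)$, which I would handle by a dichotomy on the size of $\lambda R_*$. If $\lambda R_*\le 1$, then $R_*\le 1/\lambda$, so $\log(2+R_*)\lesssim|\log\lambda|$, and the second term on the RHS of \eqref{eq: interpolation} already exceeds $\|f\|_{L^2}^2$; the estimate is trivial in this regime. If $\lambda R_*>1$, set $t:=\lambda R_*>1$; then $\log(2+R_*)\le \log 2+\log t+|\log\lambda|$, and using $\log t\le t\le t^2$ together with $|\log\lambda|\ge \log 100$, one checks the elementary inequality
\begin{equation*}
\log 2+\log t+|\log\lambda|\;\le\; 2\,t^2|\log\lambda|,
\end{equation*}
which is exactly what is needed to absorb $1/(\lambda R_*)^2$ into $|\log\lambda|/\log(2+R_*)$.

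The main obstacle is the first reduction: carefully rescaling \eqref{interpolation-ine} to the ball $B_{1/(5\delta)}$ while tracking how $|\log|h||^\gamma$ becomes $|\log(\delta|h|)|^\gamma$ and how the Fourier variable $\xi$ gets rescaled, so that the threshold $R=1/\lambda$ cleanly produces $|\log(\delta\lambda)|^{1-\gamma}$ in the denominator. Once the Fourier-side inequality is set up correctly, the split above and the two-line case analysis on $\lambda R_*$ yield \eqref{eq: interpolation} with a constant depending only on $d$ and $\gamma$.
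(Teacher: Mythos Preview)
Your approach is correct but genuinely different from the paper's. The paper never invokes the Fourier equivalence \eqref{interpolation-ine}; instead it works directly in physical space with a mollifier $\phi_\eps$, proving for each scale $\eps$ the pointwise-in-$\eps$ estimate
\[
\|f\|_{L^2}^2\lesssim \int_{\eps\le 4|h|\le 5\eps}\int\frac{|f(x+h)-f(x)|^2}{|h|^d}\,\di x\,\di h+|\log(\eps\wedge\tfrac12)|\int\frac{|\hat f(\xi)|^2}{\log(2+|\xi|)}\,\di\xi,
\]
and then integrates this against $|\log(\delta\eps)|^{-\gamma}\,\di\eps/\eps$ over $\eps\in(\lambda,\tfrac{1}{10\delta})$. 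The shell integrals in $h$ assemble into the log-Sobolev functional, while the $\eps$-integration of the second term produces the factor $|\log(\delta\lambda)|^{1-\gamma}|\log\lambda|$. The quantity $\int|\hat f|^2/\log(2+|\xi|)$ is then bounded by $2\|f\|_{L^2}^2/\log(2+R_*)$ via a single cut at $\nu=R_*$, which is the same low-frequency mechanism you use, just applied to a different functional. By contrast, you black-box \eqref{interpolation-ine} (quoted from \cite{BrueNguyen18}), rescale it, and do a clean high/low split at $R=1/\lambda$ directly on $\|f\|_{L^2}^2$. Your argument is shorter and more transparent once \eqref{interpolation-ine} is available, but it is not self-contained; the paper's mollifier proof is. One small wrinkle: rescaling \eqref{interpolation-ine} as stated gives the ball $B_{1/(2\delta)}$, not $B_{1/(5\delta)}$, and you need the \emph{lower} bound on the smaller ball, so you should note that \eqref{interpolation-ine} (or rather the $\gtrsim$ direction you need) holds equally with $B_{1/5}$ in place of $B_{1/2}$, the radius being immaterial since the singularity is at $h=0$.
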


Let us now describe two important corollaries of \autoref{interpolation}, that among other things, imply that $v$ and $\rho$ in \autoref{alberti,Crippa,Mazzucato} fulfill the assumptions of \autoref{thm:sharpness2}. 

The first corollary provides an estimate for a scaled version of the log-Sobolev functional \eqref{eq: log-Sobolev norm} applied to $\rho_t$ (the solution obtained in \autoref{alberti,Crippa,Mazzucato}) that plays a role in the proof of \autoref{thm: sharpness}. It can be proved using \eqref{eq: interpolation}, (iv)\autoref{alberti,Crippa,Mazzucato} and \autoref{remark: norm is conserved}.

\begin{corollary}\label{cor: log estimate}
	Let $\rho$ be as in \autoref{alberti,Crippa,Mazzucato} and $t>0$ be fixed. For every $\gamma\in (-\infty,1)$, $\lambda \in (0, 1/100)$ and $\gamma\in (0,1]$, it holds
	\begin{equation*}
	\int_{B_{\frac{1}{5\delta}}}\int_{\setR^d}\frac{|\rho_t(x+h)-\rho_t(x)|^2}{|h|^d|\log(\delta |h|)|^\gamma}\di x\di h
	\gtrsim_{d,\gamma}
	\norm{\rho_0}^2_{L^2} |\log(\delta \lambda)|^{1-\gamma}
	\left(
	C_{\gamma}-|\log(\lambda)|\frac{C(\norm{\rho_0}_{L^2})}{1+ct}
	\right),
	\end{equation*}
	where $c$ is the constant in \autoref{alberti,Crippa,Mazzucato}, $C_{\gamma}>0$ depends only on $\gamma$ and $C(\norm{\rho_0}_{L^2})>0$ depends only on $\norm{\rho_0}_{L^2}$.
\end{corollary}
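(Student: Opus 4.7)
The plan is to apply the interpolation inequality \eqref{eq: interpolation} from \autoref{interpolation} directly to $f=\rho_t$ with the given parameters $\gamma\in(-\infty,1)$, $\lambda\in(0,1/100)$ and $\delta\in(0,1]$, and then exploit the two dynamical facts we know about the building block $\rho$: conservation of the $L^2$ norm along the flow, and exponential decay of the $\dot H^{-1}$ norm.

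Concretely, \autoref{interpolation} yields, for some constant $A_{d,\gamma}>0$ and a universal $B>0$,
\begin{equation*}
\norm{\rho_t}_{L^2}^2
\le
\frac{A_{d,\gamma}}{|\log(\delta\lambda)|^{1-\gamma}}\, I(t)
+B\,|\log(\lambda)|\,\frac{\norm{\rho_t}_{L^2}^2}{\log\!\left(2+\norm{\rho_t}_{L^2}/\norm{\rho_t}_{\dot H^{-1}}\right)},
\end{equation*}
where $I(t)$ denotes the log-Sobolev functional on the left hand side of the statement. Since $v_t$ is smooth, bounded and divergence-free, \autoref{remark: norm is conserved} gives $\norm{\rho_t}_{L^2}=\norm{\rho_0}_{L^2}$ for all $t\ge 0$. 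Next, item (iv) of \autoref{alberti,Crippa,Mazzucato} provides a constant $K>0$ such that $\norm{\rho_t}_{\dot H^{-1}}\le K e^{-ct}$, whence
\begin{equation*}
\frac{\norm{\rho_t}_{L^2}}{\norm{\rho_t}_{\dot H^{-1}}}\ge \frac{\norm{\rho_0}_{L^2}}{K}\,e^{ct},
\qquad \forall\,t\ge 0.
\end{equation*}
Splitting into the two regimes $\norm{\rho_0}_{L^2}e^{ct}/K\ge 2$ and its complement, a short elementary check shows that there exists a constant $c_2=c_2(\norm{\rho_0}_{L^2})>0$ with
\begin{equation*}
\log\!\left(2+\norm{\rho_t}_{L^2}/\norm{\rho_t}_{\dot H^{-1}}\right)\ge c_2(1+ct),
\qquad \forall\, t\ge 0.
\end{equation*}

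Plugging these two facts into the interpolation inequality and rearranging, we obtain
\begin{equation*}
I(t)\ge \frac{|\log(\delta\lambda)|^{1-\gamma}}{A_{d,\gamma}}\,\norm{\rho_0}_{L^2}^2\left(1-\frac{B|\log(\lambda)|}{c_2(1+ct)}\right),
\end{equation*}
which is the claim with $C_\gamma=1$, $C(\norm{\rho_0}_{L^2})=B/c_2$, and the overall constant $1/A_{d,\gamma}$ absorbed into the symbol $\gtrsim_{d,\gamma}$. The only non-routine step is the chain $\norm{\rho_t}_{\dot H^{-1}}\le Ke^{-ct}\Rightarrow \log(2+\norm{\rho_t}_{L^2}/\norm{\rho_t}_{\dot H^{-1}})\gtrsim 1+ct$, and this is essentially bookkeeping; everything else is a direct application of results already established in the excerpt.
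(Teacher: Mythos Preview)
Your proof is correct and follows exactly the route the paper indicates: it says the corollary ``can be proved using \eqref{eq: interpolation}, (iv)\autoref{alberti,Crippa,Mazzucato} and \autoref{remark: norm is conserved}'', and you carry this out in full detail. One tiny remark: the constant you call ``universal $B$'' actually inherits the $d,\gamma$ dependence from $\lesssim_{d,\gamma}$ in \eqref{eq: interpolation}, so your $C(\norm{\rho_0}_{L^2})=B/c_2$ also depends on $d,\gamma$; this is harmless here (and the paper's own statement is equally informal on this point), but worth noting.
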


The second corollary of \autoref{interpolation} follows from \eqref{eq: interpolation} setting $\delta=1$ and 
\begin{equation*}
|\log (\lambda)|=
\left[
\frac{\log\left(2+\frac{\norm{f}_{L^2}}{\norm{f}_{\dot H^{-1}}}\right)}{\norm{f}_{L^2}^2}
\int_{B_{1/5}}\int_{\mathbb{R}^d}\frac{|f(x+h)-f(x)|^2}{|h|^d\log(1/ |h|)^\gamma}\di x\di h\right]^{\frac{1}{2-\gamma}}.
\end{equation*}

\begin{corollary}\label{cor: interpolation}
	For every parameter $\gamma\in (-\infty,1)$ it holds
	\begin{equation}\label{z11}
\log\left(2+\frac{\norm{f}_{L^2}}{\norm{f}_{\dot H^{-1}}}\right)^{1-\gamma} \norm{f}_{L^2} 
	\lesssim_{d,\gamma} 	
	\int_{B_{1/5}}\int_{\mathbb{R}^d}\frac{|f(x+h)-f(x)|^2}{|h|^d}\frac{1}{\log(1/ |h|)^\gamma}\di x\di h,
	\end{equation}
	for every $f\in L^2(\setR^d)$.      
\end{corollary}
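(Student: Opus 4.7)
\textbf{Proof proposal for Corollary \ref{cor: interpolation}.}
The plan is exactly the one hinted at after the statement: apply the interpolation inequality of Proposition \ref{interpolation} with $\delta=1$ and pick $\lambda$ to balance the two terms on the right-hand side. To lighten notation, set
\[
I:=\int_{B_{1/5}}\int_{\mathbb{R}^d}\frac{|f(x+h)-f(x)|^2}{|h|^d\,\log(1/|h|)^\gamma}\,\di x\,\di h,\qquad L:=\log\!\left(2+\tfrac{\norm{f}_{L^2}}{\norm{f}_{\dot H^{-1}}}\right).
\]
With $\delta=1$, \eqref{eq: interpolation} reads
\[
\norm{f}_{L^2}^2\;\lesssim_{d,\gamma}\;\frac{I}{|\log\lambda|^{1-\gamma}}\;+\;|\log\lambda|\,\frac{\norm{f}_{L^2}^2}{L}.
\]

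First I would treat the case $\norm{f}_{L^2}=0$ trivially, and reduce to the situation $\norm{f}_{\dot H^{-1}}<\infty$ (otherwise $L=\log 2$ and the claim is \eqref{z11} with a constant absorbed into $\lesssim_{d,\gamma}$). Next, I optimize in $\lambda$: viewing the right-hand side as a function of $t:=|\log\lambda|$, the two terms are $c_1 t^{-(1-\gamma)}$ and $c_2 t$, and standard Young-type balancing suggests the choice
\[
t_\star:=\left[\frac{L\,I}{\norm{f}_{L^2}^2}\right]^{\frac{1}{2-\gamma}},
\]
which is precisely the one indicated in the excerpt. Substituting $t=t_\star$, a direct computation collapses both contributions on the right-hand side to a common quantity, yielding
\[
\norm{f}_{L^2}^2\;\lesssim_{d,\gamma}\;I^{\frac{1}{2-\gamma}}\,L^{-\frac{1-\gamma}{2-\gamma}}\,\norm{f}_{L^2}^{\frac{2(1-\gamma)}{2-\gamma}}.
\]
Isolating $\norm{f}_{L^2}$ and raising to the power $2-\gamma$ gives $L^{1-\gamma}\norm{f}_{L^2}^2\lesssim_{d,\gamma} I$, which is \eqref{z11} (up to the standard mild abuse of notation between $\norm{f}_{L^2}$ and $\norm{f}_{L^2}^2$ in the statement, which is fixed by either reading or scaling $f$).

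The main obstacle I anticipate is the admissibility constraint $\lambda\in(0,1/100)$ in Proposition \ref{interpolation}, i.e., I must verify $t_\star\geq \log 100$. In the complementary regime $t_\star<\log 100$, the ratio $L\,I/\norm{f}_{L^2}^2$ is bounded by a dimensional constant, which yields $L^{1-\gamma}\norm{f}_{L^2}^2\lesssim_{d,\gamma}I$ directly; alternatively, one replaces $t_\star$ by $\max\{t_\star,\log 100\}$ and notes that this affects only the constants. After handling this case dichotomy, the conclusion follows and the sketched corollary is established. The remainder of the argument is purely algebraic.
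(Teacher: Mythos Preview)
Your main optimization step---setting $\delta=1$ in \eqref{eq: interpolation} and choosing $|\log\lambda|=t_\star=\bigl(LI/\norm{f}_{L^2}^2\bigr)^{1/(2-\gamma)}$---is exactly the paper's proof, and your algebra leading to $L^{1-\gamma}\norm{f}_{L^2}^2\lesssim_{d,\gamma} I$ is correct in that regime.

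Your treatment of the admissibility constraint $\lambda\in(0,1/100)$, however, contains a direction error. When $t_\star<\log 100$ you correctly deduce $LI/\norm{f}_{L^2}^2\le(\log 100)^{2-\gamma}$, but this is an \emph{upper} bound on $I$ in terms of $\norm{f}_{L^2}^2$, not a lower one; it cannot yield $L^{1-\gamma}\norm{f}_{L^2}^2\lesssim I$ ``directly.'' The alternative of replacing $t_\star$ by $\log 100$ does not help either: with that choice the term $C(\log 100)\norm{f}_{L^2}^2/L$ on the right of \eqref{eq: interpolation} is not absorbable into the left-hand side when $L$ is close to $\log 2$. In fact this regime is genuinely problematic for the stated inequality: in dimension $d\ge 3$ take $\hat f=\mathbf{1}_{B_\eps}$ with $\eps\to 0$; then $\norm{f}_{L^2}^2\sim\eps^d$, $\norm{f}_{\dot H^{-1}}^2\sim\eps^{d-2}$, so $L\to\log 2$, while $I\lesssim_{d,\gamma}\eps^{d+2}$, and \eqref{z11} (read with $\norm{f}_{L^2}^2$, as you correctly note it should be) fails by a factor $\eps^{-2}$. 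The paper simply elides the constraint; your instinct to check it was the right one, but the gap you uncover cannot be closed without an additional hypothesis---for instance $L$ bounded below by a large enough constant, which is all that is actually used in the application to \autoref{prop: mixing estimate}.
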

\autoref{cor: interpolation} immediately implies \autoref{thm:sharpness2}: apply \eqref{z11} with $\gamma=1-p$ to $\rho_t$ (the solution built in \autoref{alberti,Crippa,Mazzucato}) and use (iv)\autoref{alberti,Crippa,Mazzucato} and  \autoref{remark: norm is conserved}.

The remaining part of this section is devoted to the proof of \autoref{interpolation}.

\begin{proof}[Proof of \autoref{interpolation}]  
	 Fix $\varepsilon>0$. Let us fix $\phi\in C_c^\infty(\setR^d)$
	 such that $\phi=1$ in $B_1\setminus B_{1/2}$, $\phi=0$ in $\left(B_{5/4}\setminus B_{1/4}\right)^c$ and $\int_{\setR^d}\phi=1$. Set $\phi_\varepsilon(x):=\varepsilon^{-d}\phi(x/\varepsilon)$.
	 Thus we have 
	\begin{align*}
	\norm{f\ast \phi_\varepsilon}_{L^2}^2&=	\norm{\hat f\hat \phi_\varepsilon}_{L^2}^2
	\leq \norm{\log(2+|\cdot|)|\hat \phi_\varepsilon(\cdot)|^2}_{L^{\infty}}\int_{\setR^d}\frac{|\hat{f}(\xi)|^2}{\log(2+|\xi|)}\di \xi\\
	&=
	\norm{\log(2+\eps^{-1}|\cdot|)|\hat \phi(\cdot)|^2}_{L^{\infty}}\int_{\mathbb{R}^d}\frac{|\hat{f}(\xi)|^2}{\log(2+|\xi|)}\di \xi\\
	& \lesssim_d \abs{\log\left(\varepsilon\wedge \frac{1}{2}\right)} \int_{\mathbb{R}^d}\frac{|\hat{f}(\xi)|^2}{\log(2+|\xi|)}\di \xi,
	\end{align*}
	and
	\begin{align*}
	\norm{f-f\ast \phi_\varepsilon}_{L^2}^2\lesssim  \int_{\varepsilon\leq 4|h|\leq 5\varepsilon}\int_{\mathbb{R}^d}\frac{|f(x+h)-f(x)|^2}{|h|^d}\di x\di h.
	\end{align*}
	Therefore
	\begin{align}\label{z4}
	\norm{f}_{L^2}^2\lesssim_d \int_{\varepsilon\leq 4|h|\leq 5\varepsilon}\int_{\mathbb{R}^d}\frac{|f(x+h)-f(x)|^2}{|h|^d}\di x\di h+\abs{\log\left(\varepsilon\wedge \frac{1}{2}\right)} \int_{\mathbb{R}^d}\frac{|\hat{f}(\xi)|^2}{\log(2+|\xi|)}\di \xi.
	\end{align}
	Now we integrate \eqref{z4} with respect to a variable $\eps$ against a suitable kernel obtaining
	\begin{align}\nonumber
	\int_{\lambda}^{\frac{1}{10\delta}}\frac{1}{|\log(\delta\varepsilon)|^\gamma}\frac{\di \varepsilon}{\varepsilon}	\norm{f}_{L^2(\mathbb{R}^d)}^2&\lesssim_d  \int_{\lambda}^{\frac{1}{10\delta}} \int_{\varepsilon\leq 4|h|\leq 5\varepsilon}\int_{\mathbb{R}^d}\frac{|f(x+h)-f(x)|^2}{|h|^d}\di x\di h\frac{1}{|\log(\delta\varepsilon)|^\gamma}\frac{\di\varepsilon}{\varepsilon}	\\&+\int_{\lambda}^{\frac{1}{10\delta}}\frac{\abs{\log\left(\varepsilon\wedge \frac{1}{2}\right)}}{|\log(\delta\varepsilon)|^{\gamma}}\frac{\di \varepsilon}{\varepsilon} \int_{\mathbb{R}^d}\frac{|\hat{f}(\xi)|^2}{\log(2
		+|\xi|)}\di \xi.
	\label{es re-interpolation1}
	\end{align}
	Starting from the elementary inequalities
	\begin{equation*}
		-\frac{1}{1-\gamma}\frac{\di}{\di \varepsilon}|\log(\delta\varepsilon)|^{1-\gamma}=\frac{1}{|\log(\delta\varepsilon)|^\gamma}\frac{1}{\varepsilon} 
	\qquad \text{for}\ \varepsilon<\frac{1}{\delta},
	\end{equation*}
	and 
	\begin{equation*}
		-\frac{\di}{\di \varepsilon}\frac{|\log(\varepsilon)|^2}{|\log(\delta\varepsilon)|^{\gamma}}
		=\frac{|\log(\varepsilon)|}{|\log(\delta\varepsilon)|^{\gamma}}\frac{1}{\varepsilon}\left( 2-\gamma\frac{|\log(\varepsilon)|}{|\log(\delta\varepsilon)|}\right)
		\geq  \frac{|\log(\varepsilon)|}{|\log(\delta\varepsilon)|^{\gamma}}\frac{1}{\varepsilon} \qquad\text{for}\ \varepsilon < 1,
	\end{equation*} 
	we deduce
	\begin{equation}\label{z5}
	\int_{\lambda}^{\frac{1}{10\delta}}\frac{1}{|\log(\delta\varepsilon)|^\gamma}\frac{\di \varepsilon}{\varepsilon} \simeq_{\gamma}
	 |\log(\delta\lambda)|^{1-\gamma},
	\end{equation}
	when $\delta \lambda$ is small enough (for instance we can ask $\delta\lambda<1/100$, that is verified under our assumptions), and
	\begin{equation}\label{z6}
	 \int_{\lambda}^{\frac{1}{10\delta}}\frac{\abs{\log\left(\varepsilon\wedge \frac{1}{2}\right)}}{|\log(\delta\varepsilon)|^{\gamma}}\frac{\di \varepsilon}{\varepsilon}
	 \lesssim_{\gamma} |\log(\delta \lambda)|^{1-\gamma}\left( \frac{|\log(\lambda)^2}{\log(\delta \lambda)}+\left(  \frac{|\log(\delta)|}{|\log(\delta\lambda)|}\right)^{1-\gamma}\right)
	 \lesssim_{\gamma}|\log(\delta \lambda)|^{1-\gamma}|\log(\lambda)|,
	\end{equation}
	for every $\delta>0$. 
	Putting \eqref{es re-interpolation1}, \eqref{z5} and \eqref{z6} together we get
	\begin{equation*}
	\norm{f}_{L^2}^2
	\lesssim_{d,\gamma}   \frac{1}{|\log(\delta\lambda)|^{1-\gamma}}\int_{B_{\frac{1}{5\delta}}}\int_{\mathbb{R}^d}\frac{|f(x+h)-f(x)|^2}{|h|^d|\log(\delta |h|)|^\gamma}\di x\di h+|\log(\lambda)|\int_{\mathbb{R}^d}\frac{|\hat{f}(\xi)|^2}{\log(2+|\xi|)}\di \xi.
	\end{equation*}
	In order to conclude the proof it remains only to show	
	\begin{equation}\label{interpolation1}
		\int_{\mathbb{R}^d}\frac{|\hat{f}(\xi)|^2}{\log(2+|\xi|)}\di \xi\leq  \frac{2\norm{f}_{L^2}^2}{\log(2+\frac{\norm{f}_{L^2}}{\norm{f}_{\dot H^{-1}}})}.
	\end{equation}
	To this aim we fix a parameter $\nu>0$, and we estimate
		\begin{align*}
		\int_{\mathbb{R}^d}\frac{|\hat{f}(\xi)|^2}{\log(2+|\xi|)}\di \xi
		&=\int_{|\xi|\leq \nu}\frac{|\xi|^2}{\log(2+|\xi|)}|\xi|^{-2}|\hat{f}(\xi)|^2\di\xi+\int_{|\xi|\geq \nu}\frac{1}{\log(2+|\xi|)}|\hat{f}(\xi)|^2\di\xi\\
		&\leq \frac{\nu^2}{\log(2+\nu)}\int_{|\xi|\leq \nu}|\xi|^{-2}|\hat{f}(\xi)|^2\di\xi+\frac{1}{\log(2+\nu)}\int_{|\xi|\geq \nu}|\hat{f}(\xi)|^2\di\xi
		\\
		&\leq \frac{\nu^2}{\log(2+\nu)}\norm{f}_{\dot H^{-1}}^2+\frac{1}{\log(2+\nu)}\norm{f}_{L^2}^2.
		\end{align*}
		Choosing $\nu=\frac{\norm{f}_{L^2}}{\norm{f}_{\dot H^{-1}}}$, one gets \eqref{interpolation1}.		
\end{proof}

\subsection{Proof of \autoref{thm: sharpness}}
Before going into details with the proof of \autoref{thm: sharpness} we present the last technical ingredient: an approximate orthogonality property, with respect to the log-Sobolev functional \eqref{eq: log-Sobolev norm}, for functions with disjoint supports.

\begin{lemma} \label{subesti}
	Let $\gamma\in (-\infty,1)$ be fixed. For every $n\in \setN$ consider an open set $\Omega_n$, a function $f_n\in L^2(\setR^d)$ and a parameter $0<\lambda_n<1/4$. Assume that the family $\set{\Omega_n}_{n\in \setN}$ is disjoint and that the distance between $\supp f_n$ and $\setR^d\setminus \Omega_n$ is bigger than $\lambda_n$.
	
	Then it holds
	\begin{align}\nonumber
	\int_{B_{1/3}} & \int_{\mathbb{R}^d}\frac{|\sum_n f_n(x+h)-\sum_n f_n(x)|^2}{|h|^d}\frac{1}{\log(1/|h|)^\gamma} \di x\di h\\
	&\ge \limsup_{N\to\infty} \sum_{n=1}^N \left( \int_{B_{1/3}}\int_{\mathbb{R}^d}\frac{|f_n(x+h)-f_n(x)|^2}{|h|^d\log(1/|h|)^\gamma} \di x\di h-\frac{4\norm{f_n}_{L^2}^2}{1-\gamma}|\log(\lambda_n)|^{1-\gamma}\right).
	\label{essumf}
	\end{align}
\end{lemma}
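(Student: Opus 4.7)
My plan is to exploit the disjointness of the $\Omega_n$ together with the ``safety margin'' $\lambda_n$ to localize each summand $f_n(\cdot+h)-f_n(\cdot)$ on a region of $\setR^d\times B_{1/3}$ where the other summands vanish identically. Setting $g_n(x,h):=f_n(x+h)-f_n(x)$ and $\omega(h):=|h|^{-d}\log(1/|h|)^{-\gamma}$, the inequality reduces to comparing $\int_{B_{1/3}}\int|\sum_n g_n|^2\,\omega\,\di x\,\di h$ with $\sum_n\int_{B_{1/3}}\int|g_n|^2\,\omega\,\di x\,\di h$.

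The core construction I propose is the family
\[
B_n:=\{(x,h)\in\setR^d\times B_{1/3}\,:\,x\in\Omega_n,\ x+h\in\Omega_n,\ |h|<\lambda_n\}.
\]
I would first verify two elementary set-theoretic facts. Pairwise disjointness of $\{B_n\}_{n\in\setN}$ is immediate from $\Omega_n\cap\Omega_m=\emptyset$. The margin assumption $\Dist(\supp f_n,\setR^d\setminus\Omega_n)>\lambda_n$ implies that whenever $|h|<\lambda_n$ and either of $x$ or $x+h$ lies in $\supp f_n$, then both points must lie in $\Omega_n$ (because the open $\lambda_n$-neighborhood of $\supp f_n$ is contained in $\Omega_n$). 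This gives $\supp g_n\cap\{|h|<\lambda_n\}\subset B_n$. Moreover, on $B_n$ both $x$ and $x+h$ lie in $\Omega_n\subset\setR^d\setminus\Omega_m$ for every $m\neq n$, so $f_m(x)=f_m(x+h)=0$ there and therefore $\sum_m g_m(x,h)=g_n(x,h)$ on $B_n$.

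Combining these two observations, for every finite $N$ I get
\begin{align*}
\sum_{n=1}^N\int_{|h|<\lambda_n}\int_{\setR^d}|g_n|^2\,\omega\,\di x\,\di h
&=\sum_{n=1}^N\int_{B_n}|g_n|^2\,\omega\,\di x\,\di h
=\sum_{n=1}^N\int_{B_n}\Big|\sum_m g_m\Big|^2\,\omega\,\di x\,\di h\\
&\le \int_{B_{1/3}}\int_{\setR^d}\Big|\sum_m g_m\Big|^2\,\omega\,\di x\,\di h,
\end{align*}
where the last step uses the pairwise disjointness of the $B_n$.

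To close the argument, a routine tail bound converts the truncated integrals $\int_{|h|<\lambda_n}$ on the left into the full integrals $\int_{B_{1/3}}$ at the cost of the stated error. Indeed, since $\int_{\setR^d}|g_n|^2\,\di x\le 4\norm{f_n}_{L^2}^2$ and the polar-coordinates computation with the substitution $u=\log(1/r)$ yields $\int_{\lambda_n\le|h|<1/3}\omega(h)\,\di h\lesssim_d |\log\lambda_n|^{1-\gamma}/(1-\gamma)$, we obtain
\[
\int_{\lambda_n\le|h|<1/3}\int_{\setR^d}|g_n|^2\,\omega\,\di x\,\di h\le\frac{4\norm{f_n}_{L^2}^2}{1-\gamma}|\log\lambda_n|^{1-\gamma}.
\]
Rearranging, summing over $n=1,\dots,N$, and taking $\limsup_N$ yields \eqref{essumf}. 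The only delicate point is the set-theoretic construction of the disjoint localization sets $B_n$, which relies crucially on the $\lambda_n$-margin hypothesis; the rest is elementary bookkeeping.
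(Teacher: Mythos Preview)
Your argument is correct and follows the same strategy as the paper: restrict to pairwise disjoint regions of $(x,h)$-space on which $\sum_m g_m$ reduces to $g_n$, then bound the discarded annular tail $\{\lambda_n\le|h|<1/3\}$ by the elementary integral of $\omega$. The only cosmetic difference is that the paper localizes via the $\lambda_n/3$-neighborhood $\bar\Omega_n$ of $\supp f_n$ together with $B_{\lambda_n/3}$, whereas your sets $B_n$ exploit the full margin $\lambda_n$; both proofs share the same harmless imprecision regarding the dimensional surface-area constant hidden in the tail integral.
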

\begin{proof} 
	Let us call $\bar \Omega_n\subset \Omega_n$ the set of $x\in \setR^d$ whose distance from $\supp f_n$ is smaller than $\lambda_n/3$. 
	Observe that
	\begin{align*} \int_{B_{1/3}} \int_{\mathbb{R}^d}&\frac{|\sum_n f_n(x+h)-\sum_n f_n(x)|^2}{|h|^d}\frac{1}{\log(1/|h|)^\gamma} \di x\di h\\
	\geq & \limsup_{N\to\infty} \sum_{n=1}^N \int_{B_{\lambda_n/3}}\int_{\bar \Omega_n}\frac{|f_n(x+h)-f_n(x)|^2}{|h|^d}\frac{1}{\log(1/|h|)^\gamma} \di x\di h\\
	=& \limsup_{N\to\infty} \sum_{n=1}^N\Big( \int_{B_{1/3}}\int_{\setR^d}\frac{|f_n(x+h)-f_n(x)|^2}{|h|^d}\frac{1}{\log(1/|h|)^\gamma}\di x\di h\\
	&\qquad\qquad\qquad-\int_{B_{1/3}\setminus B_{\lambda_n/3}}\int_{\mathbb{R}^d}\frac{|f_n(x+h)-f_n(x)|^2}{|h|^d}\frac{1}{\log(1/|h|)^\gamma} \di x\di h\Big).
	\end{align*}
	On the other hand, 
	\begin{align*}
	&	\int_{B_{1/3}\setminus B_{\lambda_n/3}}\int_{\mathbb{R}^d}\frac{|f_n(x+h)-f_n(x)|^2}{|h|^d}\frac{1}{\log(1/|h|)^\gamma} \di x\di h\\
	&\leq 2\int_{B_{1/3}\setminus B_{\lambda_n/3}}\int_{\setR^d}\frac{|f_n(x)|^2}{|h|^d\log(1/|h|)^\gamma}\di x\di h+2\int_{B_{1/3}\setminus B_{\lambda_n/3}}\int_{\setR^d}\frac{|f_n(x+h)|^2}{|h|^d\log(1/|h|)^\gamma} \di x\di h
	\\
	&\le 4\norm{f_n}_{L^2}^2\int_{B_{1/3}\setminus B_{\lambda_n/3}}\frac{1}{|h|^d} \frac{1}{\log(1/|h|)^\gamma}\di h \le \frac{4\norm{f_n}_{L^2}^2}{1-\gamma}|\log(\lambda_n)|^{1-\gamma}.
	\end{align*}
	This concludes the proof.
\end{proof}

\begin{proof}[Proof of \autoref{thm: sharpness}]
	Let $p\ge 1$ be fixed.
	We consider $v$ and $\rho$ as in \autoref{alberti,Crippa,Mazzucato}, and a family of disjoint open cubes $\set{Q_n}_{n\in \setN}$ contained in $B_1$. Let us define
	\begin{equation}\label{eq: parameters}
	\lambda_n:=e^{-n},\qquad\gamma_n:=\frac{1}{n^2},\qquad \tau_n:=\left(n^2e^{-dn}\right)^{1/p},
	\qquad\text{for $n\in\setN$.}
	\end{equation}
	Assuming that $Q_n$ has side of length $3\lambda_n$ and center at $x_n\in B_1$, we set 
	\begin{align*}
	v_n(t,x):=\frac{\lambda_n}{\tau_n}v\left(\frac{t}{\tau_n},\frac{x-x_n}{\lambda_n}\right),
	\quad
	\rho_n(t,x):=\gamma_n\rho\left(\frac{t}{\tau_n},\frac{x-x_n}{\lambda_n}\right),
	\quad \text{for $x\in \setR^d$, $t\ge 0$, $n\in \setN$};
	\end{align*}
    observe that $u_n$ is supported in $Q_n$ and $\text{dist}(\supp u_n, \setR^d\setminus Q_n)\geq \lambda_n$ for every $n\in \setN$.
	
	Setting
	\begin{align*}
	b(t,x):=\sum_n v_n(t,x),\qquad u(t,x):=\sum_n \rho_n(t,x)
	\qquad \forall x\in \setR^d,\ \quad\forall t>0,
	\end{align*}
    the following facts hold true:
    \begin{itemize}
    	\item[(i)] $b$ is divergence-free, supported in $B_1\times [0,+\infty)$ and belongs to $L^{\infty}([0,+\infty); W^{1,p})$;
    	\item[(ii)] $u$ is bounded and supported in $B_1\times [0,+\infty)$;
    	\item[(iii)] the initial data $\bar u$ belongs to $W^{1,d}$;
    	\item[(iv)] $u$ is a solution of \eqref{CE} with vector field $b$.
   \end{itemize}
   Let us prove (i), (ii), (iii) and (iv). For any $t>0$ we have
   \begin{align*}
	\norm{b_t}_{W^{1,p}}^p &
	\leq \sum_n \left(\norm{v_n(t,\cdot)}^p_{L^p}+\norm{\nabla v_n(t, \cdot)}^p_{L^p}\right)\\
	&= \sum_n \left(\frac{\lambda_n}{\tau_n}\right)^p  \left( \lambda_n^{d}\norm{v_{t/\tau_n}}_{L^p}^p+ \lambda_n^{d-p}\norm{\nabla v_{t/\tau_n}}_{L^p}^p\right)\\
	&\leq  \sup_{s\ge 0}\norm{v_s}_{W^{1,p}}^p \sum_n \frac{\lambda_n^{d}}{\tau_n^p}
	=\sup_{s\ge 0}\norm{v_s}_{W^{1,p}}^p \sum_n \frac{1}{n^2}<\infty,
   \end{align*}
   this gives the non trivial part of (i). Point (ii) follows observing that $\sup_n \gamma_n<\infty$. In order to prove (iii) we estimate
\begin{align*}
	\norm{u_0}_{W^{1,d}}^d \leq  &  \sum_n \left( \norm{\rho_n(0, \cdot)}_{L^d}^d+\norm{\nabla \rho_n(0, \cdot)}_{L^d}^d\right)\\
	=& \sum_{n} \gamma_n^d \left( \lambda_n^d \norm{\rho_0}_{L^d}^d+ \norm{\nabla \rho_0}_{L^d}^d\right)\\
	\leq & \norm{\rho_0}_{W^{1,d}}^d \sum_n \gamma_n^d<\infty.
\end{align*}
Point (iv) is an immediate consequence of the construction.
	
We are now ready to show \eqref{eq:counterexample}. Fix $t>0$ and $\gamma\in (-\infty,1)$.
   Thanks to \autoref{subesti} and \autoref{remark: norm is conserved} we have
   	\begin{align*}
		& \int_{B_{1/3}}\int_{\mathbb{R}^d}\frac{|u(t,x+h)-u(t,x)|^2}{|h|^d\log(1/|h|)^\gamma} \di x\di h\\
		&\geq \limsup_{N\to\infty}\sum_{n=1}^N \left( \int_{B_{1/3}}\int_{\mathbb{R}^d}\frac{|\rho_n(t,x+h)-\rho_n(t,x)|^2}{|h|^d\log(1/|h|)^\gamma}\di x\di h-\frac{4\norm{\rho_n(t,\cdot)}_{L^2}^2}{1-\gamma}|\log(\lambda_n)|^{1-\gamma}\right)\\
		&= \limsup_{N\to\infty}\sum_{n=1}^N \gamma_n^2 \left( \int_{B_{1/3}}\int_{\mathbb{R}^d}\frac{|\rho\left(\frac{t}{\tau_n},\frac{x+h-x_n}{\lambda_n}\right)-\rho\left(\frac{t}{\tau_n},\frac{x-x_n}{\lambda_n}\right)|^2}{|h|^d\log(1/|h|)^\gamma} \di x\di h-\frac{4\norm{\rho\left(\frac{t}{\tau_n},\frac{\cdot}{\lambda_n}\right)}_{L^2}^2}{1-\gamma}|\log(\lambda_n)|^{1-\gamma}\right)
		\\
		&= \limsup_{N\to\infty}\sum_{n=1}^N \gamma_n^2\lambda_n^d \left( \int_{B_{\frac{1}{3\lambda_n}}}\int_{\mathbb{R}^d}\frac{|\rho\left(\frac{t}{\tau_n},x+h\right)-\rho\left(\frac{t}{\tau_n},x\right)|^2}{|h|^d|\log(|\lambda_n h|)|^\gamma} \di x\di h-\frac{4\norm{\rho_0}_{L^2}^2}{1-\gamma}|\log(\lambda_n)|^{1-\gamma}\right).
	\end{align*}
		Let us fix $n\in \setN$ and $\lambda\in (0,1/100)$. Applying \autoref{cor: log estimate} with parameters $\gamma$, $\lambda$, and $\delta=\lambda_n$ (we need to consider $n$ bigger than a suitable integer $n_{\gamma}$ depending only on $\gamma$) we get
         \begin{align*}	     \int_{B_{\frac{1}{3\lambda_n}}}&\int_{\mathbb{R}^d}\frac{|\rho\left(\frac{t}{\tau_n},x+h\right)-\rho\left(\frac{t}{\tau_n},x\right)|^2}{|h|^d|\log(|\lambda_n h|)|^\gamma} \di x\di h \\
            & \gtrsim_{\gamma} \norm{\rho_0}^2_{L^2}|\log(\lambda_n\lambda)|^{1-\gamma}\left(
            C_{\gamma}-|\log(\lambda)|\frac{\tau_nC(\norm{\rho_0}_{L^2})}{\tau_n+ct}
            \right).
		\end{align*}
		For $n\in\setN$ big enough we take $\lambda$ such that, 
		\begin{equation*}
		    a_t\tau_n^{-1}= |\log(\lambda)|
		    \quad\text{where}
		    \quad
			a_t:=\frac{C_{\gamma}}{2C(\norm{\rho_0}_{L^2})}ct,
		\end{equation*}
		obtaining
		\begin{align*}
		|\log(\lambda_n\lambda)|^{1-\gamma}\left(
		C_{\gamma}-|\log(\lambda)|\frac{\tau_nC(\norm{\rho_0}_{L^2})}{\tau_n+ct}\right) \geq 
		(|\log(\lambda_n)|+a_t(\tau_n)^{-1})^{1-\gamma}\frac{C_{\gamma}}{2}
		\geq \bar{C}  t^{1-\gamma}\tau_n^{\gamma-1},
		\end{align*}
		for every $n\geq n_{\gamma}$,
		where $\bar{C}$ is a positive constant depending only on, $c$ (see \autoref{alberti,Crippa,Mazzucato}), $\norm{\rho_0}_{L^2}$ and $\gamma$.
		
		Putting all together and recalling \eqref{eq: parameters} we get
		\begin{align*}
		 \int_{B_{1/3}}\int_{\mathbb{R}^d} &\frac{|u(t,x+h)-u(t,x)|^2}{|h|^d}\frac{1}{\log(1/|h|)^\gamma} \di x\di h\\
		 \geq & \bar{C}t^{1-\gamma}\sum_{n=n_{\gamma}}^{\infty} \gamma_n^2 \lambda_n^d \tau_n^{\gamma-1}
		 -\frac{4\norm{\rho_0}_{L^2}}{1-\gamma}\sum_{n=1}^{\infty}\gamma_n^2\lambda_n^d|\log(\lambda_n)|^{1-\gamma}\\
		 = & \bar{C}t^{1-\gamma}\sum_{n={n_{\gamma}}}^{\infty}  n^{\frac{2(\gamma-1)}{p}-4}  e^{-dn\frac{\gamma+p-1}{p}}-\frac{4\norm{\rho_0}_{L^2}}{1-\gamma}\sum_{n=1}^{\infty}n^{-\gamma-3} e^{-dn},
		\end{align*}
        that is equal to $+\infty$ when $\gamma<1-p$ and $t>0$.
\end{proof}

\subsection{Mixing estimates}
As a simple byproduct of our results (\autoref{thm:regularity result}, \autoref{cor: interpolation}) we get two mixing estimates for solutions of \eqref{CE} drifted by divergence-free vector fields bounded in $W^{1,p}$, uniformly in time, for $p>1$. 

These results are already present in the literature (see \cite[Theorem 6.2]{CrippaDeLellis08}, \cite{IyerKiselevXu14}, \cite{Seis13}, \cite{LegerFlavien16}), it is worth mentioning that the extension to the case $p=1$ is an important open problem related to the Bressan's mixing conjecture (see \cite{Bressan03}).

Let us begin with a simple estimate involving the geometric mixing scale. 

\begin{lemma}\label{lem-mix-ine} 
	Let $p>0$, $f\in L^\infty(\mathbb{R}^d)$ be such that $\norm{f}_{L^\infty}=1$ and $\leb^d(\{|f|=1\})\geq c_0>0$. Then, for any $\kappa\in (0,1)$ and $\varepsilon\in (0,1/3)$, it holds
	\begin{equation}\label{mix-ine}
	\sup_{x\in \setR^d}\left|\dashint_{B_\eps}f(x+y)\di y\right|<\kappa \implies
     \eps\geq \exp\left\lbrace -C  \left(\sup_{h\in B_{1/3}}\log(1/|h|)^p\int_{\setR^d}|f(x+h)-f(x)|\di x\right)^{1/p}\right\rbrace ,
	\end{equation}
	where $C=(c_0(1-\kappa))^{-1/p}.$
\end{lemma}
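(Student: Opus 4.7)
The plan is to contradict smallness of $\varepsilon$ by showing that whenever the ball averages are uniformly small but $f$ takes unit modulus on a set of definite measure, there must be a substantial $L^1$ oscillation at scale $\varepsilon$, which the log-Sobolev-type quantity on the right controls quantitatively.

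First I would exploit the unmixedness of $f$. Fix any $x_0$ in the set $E := \{|f|=1\}$. Since $\left|\fint_{B_\varepsilon} f(x_0+y)\di y\right| < \kappa$, the triangle inequality gives
\begin{equation*}
1 - \kappa \;<\; \left| f(x_0) - \fint_{B_\varepsilon} f(x_0+y)\di y \right| \;\le\; \fint_{B_\varepsilon} |f(x_0) - f(x_0+y)|\di y.
\end{equation*}
Integrating this inequality over $x_0 \in E$ and applying Fubini to switch the order of integration, I obtain
\begin{equation*}
c_0(1-\kappa) \;\le\; \leb^d(E)(1-\kappa) \;\le\; \fint_{B_\varepsilon} \int_{E} |f(x) - f(x+y)|\di x\di y \;\le\; \fint_{B_\varepsilon} \int_{\setR^d} |f(x) - f(x+y)|\di x\di y.
\end{equation*}

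Next I would insert the hypothesis. Set $M := \sup_{h\in B_{1/3}} \log(1/|h|)^p \int_{\setR^d} |f(x+h)-f(x)|\di x$. Since $\varepsilon < 1/3$, every $y \in B_\varepsilon$ lies in $B_{1/3}$, so
\begin{equation*}
\int_{\setR^d} |f(x+y)-f(x)|\di x \;\le\; \frac{M}{\log(1/|y|)^p}.
\end{equation*}
Moreover $|y| \le \varepsilon < 1/3$ forces $\log(1/|y|) \ge \log(1/\varepsilon) > 0$, and hence $\log(1/|y|)^{-p} \le \log(1/\varepsilon)^{-p}$. Plugging this into the previous display yields
\begin{equation*}
c_0(1-\kappa) \;\le\; \frac{M}{\log(1/\varepsilon)^p}.
\end{equation*}
Rearranging gives $\log(1/\varepsilon) \le (c_0(1-\kappa))^{-1/p} M^{1/p}$, i.e.\ $\varepsilon \ge \exp\left\{-C M^{1/p}\right\}$ with $C = (c_0(1-\kappa))^{-1/p}$, which is exactly \eqref{mix-ine}.

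There is no real obstacle here: the argument is essentially a three-line Fubini-plus-monotonicity computation, and the main subtlety is merely to notice that the unit-modulus set $E$ serves as a witness forcing the oscillation averaged over any small ball to be bounded below by $1-\kappa$, after which the log-Sobolev bound on translations takes over. The constraint $\varepsilon < 1/3$ is used exactly to ensure that $B_\varepsilon \subset B_{1/3}$ and that $\log(1/|y|)$ stays positive on $B_\varepsilon$.
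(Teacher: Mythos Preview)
Your proof is correct and follows essentially the same approach as the paper's: start from the pointwise lower bound $1-\kappa<\fint_{B_\varepsilon}|f(x)-f(x+h)|\di h$ on $\{|f|=1\}$, integrate in $x$ to gain the factor $c_0$, and then bound the inner $h$-integral by $M\log(1/\varepsilon)^{-p}$ using monotonicity of $\log(1/|h|)^{-p}$ on $B_\varepsilon\subset B_{1/3}$. Your write-up is in fact slightly more explicit than the paper's about the Fubini step and the passage from $\int_E$ to $\int_{\setR^d}$.
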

\begin{proof} For any  $x\in \left\{|f|=1\right\}$, we have $
	1-\kappa<\left|\dashint_{B_\varepsilon}(f(x)-f(x+h))\di h \right|$. 
	Therefore
	\begin{align*}
	c_0(1-\kappa)&< \dashint_{B_\varepsilon}\int_{\mathbb{R}^d}|f(x)-f(x+h)|\di x\di h\\
	&\leq \left(\sup_{h\in B_{1/3}}\log(1/|h|)^p\int_{\mathbb{R}^d}|f(x)-f(x+h)|dx\right) \dashint_{B_\varepsilon}\log(1/|h|)^{-p}\di h
	\\
	&\leq \left(\sup_{h\in B_{1/3}}\log(1/|h|)^p\int_{\mathbb{R}^d}|f(x)-f(x+h)|\di x\right) |\log(\varepsilon)|^{-p},
	\end{align*}
	this implies \eqref{mix-ine}. The proof is complete.
	
\end{proof}
We are now ready to state and prove the aforementioned mixing estimates.

\begin{proposition}\label{prop: mixing estimate}
	Let $p>1$ be fixed.
	Let us consider a bounded divergence-free vector field $b$ such that
	\begin{equation*}
	\norm{\nabla b_t}_{L^p}\leq B<\infty 
	\qquad
	\text{for a.e.}\ t\geq 0.
	\end{equation*}
	Then for every initial data $\bar u\in BV(\setR^d)\cap L^{\infty}(\setR^d)$ the (unique) solution $u\in L^{\infty}([0,+\infty)\times \setR^d)$ of \eqref{CE} satisfies
	\begin{equation}\label{eq: mixing estimate}
	\norm{u_t}_{\dot H^{-1}} \geq C\exp(-cBt),
	\qquad \text{for any $t\geq 0$},
	\end{equation}
	where $C>0$ and $c>0$ depend only on $\norm{\bar u}_{L^2}$, $\norm{\bar u}_{BV}$, $p$ and $d$. 
	
	Furthermore, if we assume $u_0(x)\in \set{1,-1,\ 0}$ for every $x\in \setR^d$ and $\int_{\setR^d} |u_0| \di x\geq c_0>0$, then for any $\kappa\in (0,1)$, and $\varepsilon\in (0,1/3)$ it holds
	\begin{equation}\label{eq: geometric mixing}
	\sup_{x\in \mathbb{R}^d}\left|\dashint_{B_\varepsilon(x)}u_t(y)\di y\right|<\kappa \Longrightarrow  	\eps\geq  \exp(-CBt),
	\end{equation}
	where $C>0$  depends only on $p$, $d$, $\kappa$, $c_0$ and $\norm{u_0}_{BV}$.
\end{proposition}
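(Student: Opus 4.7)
The plan is to derive both mixing bounds as direct consequences of the log-Sobolev regularity \autoref{thm:regularity result} combined with two external ingredients already at hand: the interpolation inequality \autoref{cor: interpolation} for the $\dot H^{-1}$ estimate, and the elementary geometric mixing lemma \autoref{lem-mix-ine} for the second implication. The renormalization property recalled in \autoref{remark: norm is conserved} (which gives $\|u_t\|_{L^2}=\|\bar u\|_{L^2}$ for divergence-free $b$) will be the glue in both parts.

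\textbf{Proof of \eqref{eq: mixing estimate}.} After replacing $\bar u$ by $\bar u/\|\bar u\|_{L^\infty}$ (and $u_t$ accordingly), \autoref{thm:regularity result} together with the hypothesis $\|\nabla b_t\|_{L^p}\le B$ gives
\begin{equation*}
\int_{B_{1/3}}\int_{\mathbb{R}^d}\frac{|u_t(x+h)-u_t(x)|^2}{|h|^d}\frac{1}{\log(1/|h|)^{1-p}}\di x \di h
\lesssim_{p,d} (Bt)^p+\|\bar u\|_{BV}^{p}+\|\bar u\|_{L^1}.
\end{equation*}
Applying \autoref{cor: interpolation} with $\gamma=1-p$ to $f=u_t$ (using $B_{1/5}\subset B_{1/3}$) and invoking the $L^2$ conservation, one obtains
\begin{equation*}
\log\!\left(2+\frac{\|\bar u\|_{L^2}}{\|u_t\|_{\dot H^{-1}}}\right)^{\!p}\|\bar u\|_{L^2}
\lesssim_{p,d} (Bt)^p+\|\bar u\|_{BV}^{p}+\|\bar u\|_{L^1}.
\end{equation*}
Since $(a^p+b)^{1/p}\le a+b^{1/p}$ for $p\ge 1$, taking $p$-th roots and rearranging yields $\log(2+\|\bar u\|_{L^2}/\|u_t\|_{\dot H^{-1}})\le c_1 Bt+c_2$, where $c_1,c_2$ depend only on $d,p,\|\bar u\|_{L^2},\|\bar u\|_{BV}$ (using $\|\bar u\|_{L^1}\le \|\bar u\|_{BV}$ by convention and reinstating the $\|\bar u\|_{L^\infty}$ factor). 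Exponentiating gives \eqref{eq: mixing estimate}.

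\textbf{Proof of \eqref{eq: geometric mixing}.} Because $b$ is divergence-free, the regular Lagrangian flow $X_t$ is measure preserving (\autoref{remark: existence uniqueness RLF}), hence $u_t=\bar u\circ Y_t$ takes values in $\{-1,0,1\}$ for $\leb^d$-a.e.\ $x$. In particular $|u_t(x+h)-u_t(x)|\in\{0,1,2\}$, and since $a\le a^2$ for all $a\in\{0,1,2\}$,
\begin{equation*}
\int_{\mathbb{R}^d}|u_t(x+h)-u_t(x)|\di x\le \int_{\mathbb{R}^d}|u_t(x+h)-u_t(x)|^2\di x.
\end{equation*}
Combining this with the supremum form of the log-Sobolev estimate provided by \autoref{remark: sup-norm} gives
\begin{equation*}
\sup_{h\in B_{1/3}}\log(1/|h|)^p\int_{\mathbb{R}^d}|u_t(x+h)-u_t(x)|\di x
\lesssim_{p,d} (Bt)^p+\|\bar u\|_{BV}^p+\|\bar u\|_{L^1}.
\end{equation*}
Feeding this bound into \autoref{lem-mix-ine} (with $f=u_t$, noting $\|u_t\|_{L^\infty}=1$ and $\leb^d(\{|u_t|=1\})=\leb^d(\{|\bar u|=1\})\ge c_0$ by measure preservation), and again using $(a^p+b)^{1/p}\le a+b^{1/p}$, yields $\varepsilon\ge \exp(-C'Bt)$ with $C'$ depending only on $p,d,\kappa,c_0,\|\bar u\|_{BV}$.

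\textbf{Main obstacle.} The only substantive point is the identification $u_t=\bar u\circ Y_t$, which forces $u_t$ to inherit the range $\{-1,0,1\}$ from $\bar u$ and underlies both the $L^2$ conservation used in part one and the crucial pointwise comparison $|u_t(x+h)-u_t(x)|\le |u_t(x+h)-u_t(x)|^2$ used in part two. This identification is a standard consequence of the well-posedness theory for \eqref{CE} in $L^\infty$ together with the measure-preserving property of the flow; once it is granted, the rest of the argument is just bookkeeping of constants.
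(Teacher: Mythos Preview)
Your proof is correct and follows exactly the same route as the paper: part one combines \autoref{thm:regularity result} with \autoref{cor: interpolation} at $\gamma=1-p$ and the $L^2$ conservation, part two uses \autoref{remark: sup-norm}, \autoref{lem-mix-ine}, and the pointwise inequality $|u_t(x+h)-u_t(x)|\le |u_t(x+h)-u_t(x)|^2$ for $\{-1,0,1\}$-valued functions. In fact you supply more justification than the paper does for why $u_t$ inherits the range $\{-1,0,1\}$ and why $\leb^d(\{|u_t|=1\})\ge c_0$ (via the measure-preserving flow), points the paper leaves implicit.
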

\begin{proof}
	The first part of the statement follows applying \autoref{cor: interpolation} with $\gamma=1-p$ and \autoref{thm:regularity result}.
	The second part is a consequence of \autoref{lem-mix-ine}, \autoref{remark: sup-norm} and the following elementary observation:
	if $f$ is a measurable function that takes only the values $1$, $0$ and $-1$ then
	\begin{equation*}
		\int_{\setR^d}|f(x+h)-f(x)| \di x \leq 	\int_{\setR^d}|f(x+h)-f(x)|^2 \di x
		\qquad
		\text{for every $h\in\setR^d$}.
	\end{equation*}
\end{proof}
Two remarks are in order.
\begin{remark}
	 A version of the mixing estimate \eqref{eq: mixing estimate} holds true also using the seminorm $\dot H^{-s}$ with $s>0$ and reads 
	\begin{equation*}
	\norm{u_t}_{\dot H^{-s}} \geq C\exp(-cBst)
	\qquad \text{for any $t\geq 0$}.
	\end{equation*}
	It can be proved using a modified version of the interpolation inequality \eqref{eq: interpolation}.
\end{remark}
\begin{remark}
	Let us assume $b_t$ to be smooth and compactly supported in $Q=[0,1]^d\subset\setR^d$ and let us call $X_t$ its flow.
	Let us consider the initial data $\bar u:=\mathbf{1}_A-\mathbf{1}_{Q\setminus A}\in BV(\setR^d)$ with $A\subset [0,1]^d$ satisfying $\leb^d(A)=\frac{1}{2}$. 
	It is immediate to see that $u_t:=\mathbf{1}_{A_t}-\mathbf{1}_{Q\setminus A_t}$ where $A_t=X_t(A)$. Setting $\kappa=1/2$, fixing $p>1$ and using \eqref{eq: geometric mixing} we get
	\begin{equation}\label{eq: Bressan p grater than 1}
	\frac{1}{4}\leq  \frac{\leb^d(B_{\eps}(x)\cap A_t) }{\omega_d \eps^d}\leq \frac{3}{4}
	\implies
	\int_0^t \norm{\nabla b_s}_{L^p} \di s \geq C |\log(\eps)|,	
	\end{equation}
	where $C$ depends on $p,d$ and $\norm{u_0}_{BV}$.
	
	\eqref{eq: Bressan p grater than 1} is the statement of Bressan's conjecture for $p>1$ (see \cite{Bressan03}) that has been proved for a first time in \cite{CrippaDeLellis08}.
\end{remark}

    Let us conclude the paper with an open question.
\begin{openquestion}\label{question}
	Let $b\in L^\infty([0,+\infty); W^{1,1}(\setR^d,\setR^d))$ be a divergence-free vector field with compact support.  Fix an initial data $\bar{u}\in C^\infty_c(\setR^d)$ and consider $u_t$ the unique solution in $L^{\infty}([0,+\infty)\times \setR^d)$ of \eqref{CE}.
	Is there an increasing function $\psi :(0,+\infty)\to (0,+\infty)$ (possibly depending on $b$) with $\lim_{s\to\infty} \psi(s)=\infty$ and $ \psi^{-1}(2s)\leq C\psi^{-1}(s)$ such that 
	\begin{equation*}
	\sup_{h\in B_{1/3}} \psi\left(\log(1/|h|)\right)\int_{\setR^d}|u_t(x+h)-u_t(x)|^2\di x\leq C\psi\left(t\right)<\infty,
	\end{equation*}
	for every $t$ big enough?
	\end{openquestion}

  A positive answer of \autoref{question}, together with the proof of  \autoref{lem-mix-ine}, gives an exponential bound on mixing in the case $p=1$, and thus solves the full mixing conjecture proposed by Bressan (see \cite{Bressan03}).


\begin{thebibliography}{GMS13}
	
	
\bibitem[ACM14]{AlbertiCrippaMazzuccato14} 
\textsc{G. Alberti, G. Crippa, A.-L. Mazzucato:} 
\textit{Exponential self-similar mixing and loss of regularity for continuity equations},
C. R. Math. Acad. Sci. Paris {\bf 352} (2014), no. 11, 901--906.


\bibitem[ACM16]{AlbertiCrippaMazzuccato16} 
\textsc{G. Alberti, G. Crippa, A.-L. Mazzucato:} 
\textit{Exponential self-similar mixing by incompressible flows},
J. Amer. Math. Soc., {\bf 32} (2019), no. 2, 445--490.


\bibitem[ACM18]{AlbertiCrippaMazzuccato18} 
\textsc{G. Alberti, G. Crippa, A.-L. Mazzucato:} 
\textit{Loss of regularity for the continuity equation with non-Lipschitz velocity field},
Ann. PDE, {\bf 5} (2019), no. 1, 5:9.

\bibitem[ACDM97]{AmbrosioCosciaDalMaso97}
\textsc{L. Ambrosio, A. Coscia, G. Dal Maso:}
\textit{Fine properties of functions with bounded deformation},
Arch. Rational Mech. Anal., {\bf 139} (1997), no. 3, 201--238.


\bibitem[A04]{Ambrosio04} 
\textsc{L. Ambrosio:} 
\textit{Transport equation and Cauchy problem for $BV$ vector fields},  Invent. Mat., {\bf 158} (2004), 227--260.

	
\bibitem[ACF15]{AmbrosioColomboFigalli15}
\textsc{L. Ambrosio, M. Colombo, A. Figalli:}
\textit{Existence and uniqueness of maximal regular flows for non-smooth vector fields},
Arch. Ration. Mech. Anal., {\bf 218}, (2015), no. 2, 1043--1081.
	
	
	
\bibitem[AC14]{AmbrosioCrippa14}
\textsc{L. Ambrosio, G. Crippa:}
\textit{Continuity equations and ODE flows with non-smooth velocity},
Proceedings of the Royal Society of Edinburgh: Section A, {\bf 144} (2014), 1191--1244.
	
	
	
	
\bibitem[ALM05]{AmbrosioLecumberryManiglia}
\textsc{L. Ambrosio, M. Lecumberry, S. Maniglia:}
\textit{Lipschitz regularity and approximate differentiability of the DiPerna-Lions flow,}
Rend. Sem. Mat. Univ. Padova, {\bf 114} (2005).
	
	

	
\bibitem[BJ15]{BreschJabin15}
\textsc{D. Bresch and P.-E. Jabin:}
\textit{Global Existence of weak Solutions for compressible Navier-Stokes equations: thermodynamically unstable pressure and anisotropic viscous stress tensor},
Ann. of Math. (2), {\bf 188} (2018), no. 2, 577--684.

	
	
\bibitem[B03]{Bressan03}
\textsc{A. Bressan:}
\textit{A lemma and a conjecture on the cost of rearrangements},
Rend. Sem. Mat. Univ. Padova, {\bf110} (2003), 97--102.


	
\bibitem[BN18b]{BrueNguyen18}
\textsc{E. Bru\'e, Q.-H. Nguyen:}
\textit{On the Sobolev space of functions with derivative of logarithmic order}, accepted for publication in Advances in Nonlinear Analysis.

	
	
\bibitem[CDL08]{CrippaDeLellis08}
\textsc{G. Crippa, C. De Lellis:}
\textit{Estimates and regularity results for the Di Perna-Lions flow},
J. Reine Angew. Math., {\bf 616} (2008), 15--46.
	

	
\bibitem[DDN18]{Daodiaznguyen} 
\textsc{N.-A Dao, J.-I Diaz,  Q.-H. Nguyen:}
\textit{Generalized Gagliardo-Nirenberg  inequalities using Lorentz spaces and BMO},
Nonlinear Analysis: Theory, Methods, Applications., {\bf 173} (2018), 146--153.	 
	
	
\bibitem[DPL89]{lions}
\textsc{R.-J. DiPerna,  P-.L. Lions:}
\textit{Ordinary differential equations, transport theory and Sobolev spaces},
Invent. Math., {\bf 98} (1989), 511--547.	
	

	
\bibitem[HSSS18]{HSSS18}
\textsc{M. Hadvzic, A. Seeger, C. Smart, B.	Street :}
\textit{Singular integrals and a problem on mixing flows},
Ann. Inst. H. Poincarè Anal. Non Lineaire, {\bf 35} (2018), 921--943.


	
\bibitem[IKX14]{IyerKiselevXu14}
\textsc{G. Iyer, A. Kiselev, X. Xu:}
\textit{Lower bounds on the mix norm of passive scalars advected by incompressible enstrophy-constrained flows},
Nonlinearity, {\bf 27} (2014), no. 5, 973--985.



\bibitem[LF16]{LegerFlavien16}
\textsc{F. L\'eger:}
\textit{A new approach to bounds on mixing},
Math. Models Methods Appl. Sci., {\bf 28} (2018), no. 5, 829--849.


\bibitem[QN18]{Nguyen18-1}
\textsc{Q.-H. Nguyen:} 
\textit{Quantitative estimates for regular Lagrangian flows with $BV$ vector fields}, 
ArXiv:1805.01182, submitted.

	
	
\bibitem[S13]{Seis13} 
\textsc{C. Seis:}
\textit{Maximal mixing by incompressible fluid flows},
Nonlinearity, {\bf 26}  (2013), no. 12,  3279--3289.
	
	
\bibitem[ST]{Stein}
\textsc{E. Stein:}
\textit{Singular integrals and differentiability properties of functions}, 
Princeton University Press, 1970.
	

		
\end{thebibliography}
\end{document}